\documentclass[12pt]{amsart}
\pdfoutput=1
\usepackage[margin=1in]{geometry}
\usepackage[utf8]{inputenc}
\usepackage{graphicx}
\usepackage{amsmath,amssymb}
\usepackage{amsthm}
\usepackage{color}
\usepackage{tikz, cite}
\usepackage[utf8]{inputenc}
\usepackage{pgfplots}
\pgfplotsset{compat=1.18}
\usetikzlibrary{fillbetween}
\allowdisplaybreaks

\makeatletter
\def\l@section{\@tocline{1}{12pt plus2pt}{0pt}{}{\bfseries}}
\def\l@subsection{\@tocline{2}{0pt}{2pc}{2pc}{}}
\makeatother
\makeatletter
\def\subsection{\@startsection{subsection}{2}{\z@}%
	{-3.25ex\@plus -1ex \@minus -.2ex}%
	{1.5ex \@plus .2ex}%
	{\normalfont\bfseries\boldmath}}
\def\subsubsection{\@startsection{subsubsection}{3}%
	\z@{.5\linespacing\@plus.7\linespacing}{-.5em}%
	{\normalfont\bfseries\boldmath}}
\renewcommand\paragraph{\@startsection{paragraph}{4}{\z@}%
	{3.25ex \@plus1ex \@minus.2ex}%
	{-1em}%
	{\normalfont\normalsize\bfseries}}
\makeatother

\theoremstyle{plain}
\newtheorem{thm}{Theorem}[section]

\newtheorem{lem}[thm]{Lemma}
\newtheorem{prop}[thm]{Proposition}

\theoremstyle{definition}
\newtheorem{defn}[thm]{Definition}

\theoremstyle{remark}
\newtheorem{rem}[thm]{Remark}

\theoremstyle{plain}

\numberwithin{equation}{section}

\theoremstyle{plain} 
\newcommand{\thistheoremname}{}
\newtheorem{genericthm}[thm]{\thistheoremname}

  \newtheorem*{genericthm*}{\thistheoremname}
\newenvironment{namedthm*}[1]
  {\renewcommand{\thistheoremname}{#1}%
   \begin{genericthm*}}
  {\end{genericthm*}}

\newcommand{\R}{{\mathbb R}}

\newcommand{\N}{{\mathbb N}}

\newcommand{\Z}{{\mathbb Z}}
\newcommand{\calT}{{\mathcal T}}

\newcommand{\calS}{{\mathcal S}}
\newcommand{\supp}{{\textnormal{supp}}}
\newtheorem{thmA}{Theorem}

\makeatletter
\newcommand{\vast}{\bBigg@{4}}
\newcommand{\Vast}{\bBigg@{5}}
\makeatother

\def\udot#1{\ifmmode\oalign{$#1$\crcr\hidewidth.\hidewidth
    }\else\oalign{#1\crcr\hidewidth.\hidewidth}\fi}

\def\R{\mathbb{R}}
\def\Z{\mathbb{Z}}

\def\beq{\begin{equation}}
\def\eeq{\end{equation}}
\makeatletter
\newcommand{\doublewidetilde}[1]{{%
  \mathpalette\double@widetilde{#1}%
}}
\newcommand{\double@widetilde}[2]{%
  \sbox\z@{$\m@th#1\widetilde{#2}$}%
  \ht\z@=.9\ht\z@
  \widetilde{\box\z@}%
}
\makeatother

\def\one{\mbox{1\hspace{-4.25pt}\fontsize{12}{14.4}\selectfont\textrm{1}}}

\usepackage{tikz}

\makeatletter
\def\@makefnmark{%
  \leavevmode
  \raise.9ex\hbox{\fontsize\sf@size\z@\normalfont\tiny\@thefnmark}}
\makeatother

\begin{document}
	
\title[]{On the parabolic $H^p$ Theory Generated by $(p,\infty)$-Atoms for $0<p<1$}

\author{Yongsheng Han}
\address{(Yongsheng Han) Department of Mathematics and Statistics\\
         Auburn University\\
         Auburn, Alabama, U.S.A, 36849}
\email{hanyong@auburn.edu}

\author{Bingyang Hu}
\address{(Bingyang Hu) Department of Mathematics and Statistics\\
        Auburn University\\
        Auburn, Alabama, U.S.A, 36849}
\email{bzh0108@auburn.edu}

\begin{abstract}
We study the parabolic maximal operator $M_{\textnormal{par}}$ along the
moment curve $(t,t^2)$. In 1988, Christ proved that
$M_{\textnormal{par}}$ maps the parabolic Hardy space
$H_{\textnormal{par}}^1(\R^2)$, formulated using $(1,\infty)$-atoms,
into $L^{1,\infty}(\R^2)$. Working directly with parabolic
$(p,\infty)$-atoms, we show that this result is sharp at $p=1$: for
every $0<p<1$, the natural extension from
$H_{\textnormal{par}}^p(\R^2)$ to $L^{p,\infty}(\R^2)$ fails even for
the corresponding single-scale operator. We then introduce a
curvature-adapted modified Hardy space
$H_{\textnormal{par}}^{p,*}(\R^2)$ and a weak tendril space
$\calT^{p,\infty}(\R^2)$, and prove that
$$
M_{\textnormal{par}}:
H_{\textnormal{par}}^{p,*}(\R^2)
\longrightarrow
\calT^{p,\infty}(\R^2), \qquad 0<p<1,
$$
is bounded. At $p=1$, these spaces recover those in Christ's theorem:
$H_{\textnormal{par}}^{1,*}(\R^2)=H_{\textnormal{par}}^1(\R^2)$ and
$\calT^{1,\infty}(\R^2)=L^{1,\infty}(\R^2)$. Thus, our result provides a natural
extension of Christ's work to the range $0<p<1$.
\end{abstract}

\date{\today}

\subjclass[2020]{42B25, 42B30, 42B35.}

\keywords{Parabolic maximal operator, parabolic Hardy spaces,
$(p,\infty)$-atoms, weak-type estimates, curvature-adapted atomic
decompositions, weak tendril spaces.}

\maketitle
\tableofcontents

\section{Introduction}
Let $\varphi$ be a positive bump function on $\R$ such that $\varphi \le 1$ with $\varphi \equiv 1$ on $(1, 2)$ and $\supp \; \varphi \subseteq (1/2, 5/2)$.
In this paper, we are interested in the following maximal operator along the moment curve $(t, t^2)$:
\begin{align} \label{20260625avgop}
M_{\textnormal{par}}f(x)
&:=\sup_{r>0} \left| A_r f(x) \right| \nonumber  \\ 
&:=\sup_{r>0} \left| \int_{\R} f(x_1-t, x_2-t^2) \varphi \left(\frac{t}{r} \right) \frac{dt}{r} \right|. 
\end{align}
In 1988, Christ in his seminal work \cite{Christ1988} proved the following deep theorem. 
\begin{thmA}[{\cite[Theorem~3]{Christ1988}}] \label{20260625thm01}
 $M_{\textnormal{par}}$ is bounded from
$H_{\textnormal{par}}^1(\mathbb R^2)$ to $L^{1,\infty}(\mathbb R^2)$.
\end{thmA}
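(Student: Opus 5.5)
By atomic decomposition and the fact that $L^{1,\infty}$ is not normable, the plan is not simply to sum atom-by-atom weak-type bounds. Instead we follow a Calderón--Zygmund-type strategy adapted to parabolic atoms. Write $f=\sum_j \lambda_j a_j$ with $\sum_j|\lambda_j|\lesssim \|f\|_{H^1_{\textnormal{par}}}$. Each $a_j$ is a $(1,\infty)$-atom supported in a parabolic rectangle $R_j$ of dimensions $\delta_j\times\delta_j^2$, with $\|a_j\|_\infty\le |R_j|^{-1}$ and $\int a_j=0$. Fix $\alpha>0$.

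First, remove the exceptional set $E=\bigcup_j C R_j^*$, where $R_j^*$ is a suitable dilate. Atoms with $|\lambda_j|/|R_j|\le\alpha$ (the "good" part $g=\sum_{\text{small}}\lambda_j a_j$) satisfy $\|g\|_{L^2}^2\lesssim \alpha\sum|\lambda_j|$ after grouping overlapping rectangles. We then use the $L^2$ boundedness of $M_{\textnormal{par}}$ (Stein--Wainger/Nagel--Riviere--Wainger) together with Chebyshev. For the remaining "bad" atoms, the rectangles have total measure $\le \alpha^{-1}\sum|\lambda_j|$, so we may discard their dilates.

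Off the exceptional set, we split $A_r a_j$ by scale. For $r\gg \delta_j$, the cancellation of $a_j$ and the smoothness of $\varphi(t/r)/r$ give
\[
|A_r a_j(x)|\lesssim \frac{\delta_j}{r}\cdot\frac{1}{r}\,\mathbf 1_{\{x:\ \mathrm{dist}(x,\text{curve through }R_j\text{ at scale }r)\lesssim \delta_j^2+r\delta_j\}}.
\]
This is the key point: the curve swept out is a curved tube ("tendril") of width roughly $r\delta_j$, and not a full parabolic box. Summing in dyadic $r$ gives a bound that is integrable only in a weak sense. The single-atom estimate $\|M_{\textnormal{par}}a\|_{L^{1,\infty}}\lesssim1$ is straightforward from this.

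The main obstacle is to sum over $j$ in $L^{1,\infty}$. The tubes generated by different atoms can overlap heavily, and the direct estimate for small-$r$ contributions, $\sup_{r\lesssim\delta_j}|A_ra_j|\lesssim |R_j|^{-1}\cdot(\text{tube indicator})$, is not integrable in a summable way. Following Christ, we would handle this by a second-level $L^2$ argument. We group the bad atoms by scale $\delta=2^{-k}$ and, within each scale, by the direction of the tangent of the curve. We then prove an $L^2$ estimate for the square function $\bigl(\sum_k|\sum_{\delta_j=2^{-k}}\lambda_j\,\sup_{r\sim 2^{-k}2^{m}}|A_r a_j|\,\mathbf 1_{E^c}|^2\bigr)^{1/2}$, with decay $2^{-\epsilon m}$. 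This uses the curvature of $(t,t^2)$ via a $TT^*$ argument: the tubes from atoms of comparable scale at separated positions intersect transversally, so that $\|\sum_j \lambda_j |R_j|^{-1}\mathbf 1_{T_j}\|_2^2\lesssim \alpha\sum|\lambda_j|$ once each height is truncated at $\alpha$. Chebyshev at level $\alpha$ then yields
\[
|\{M_{\textnormal{par}}f>\alpha\}|\lesssim \alpha^{-1}\|f\|_{H^1_{\textnormal{par}}},
\]
with the geometric sum over $m$ converging. I expect the almost-orthogonality (transversality) estimate between tendrils of different atoms to be the step that requires the most work.
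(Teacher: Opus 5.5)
There is a genuine gap, starting with the estimate you call the key point. The bound $|A_ra_j(x)|\lesssim \delta_j r^{-2}$ for $r\gg\delta_j$ is false. For $x$ in the scale-$r$ tendril, the arc $t\mapsto x-(t,t^2)$, $t\simeq r$, meets $R_j$ only for $t$ in an interval of length $\lesssim \delta_j^2/r$, and $a_j$ need have no cancellation along that arc. So pointwise the vanishing mean buys nothing, and the correct bound is Lemma~\ref{20260713lem01} with $p=1$, i.e.\ $|A_ra_j|\lesssim \delta_j^{-1}r^{-2}$ on the tendril. This bound is sharp. The construction in Section~\ref{Sec02} never uses $p<1$, and at $p=1$ it produces a $(1,\infty)$-atom with $A\mathfrak a_{Q_\rho}\gtrsim \rho^{-1}$ on $E_\rho$, where your bound would give $\lesssim\rho$.

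With the correct bound, your exceptional set is too small. A bad atom with height $\beta_j=|\lambda_j|/|R_j|>\alpha$ can have $|\lambda_j|M_{\textnormal{par}}a_j>\alpha$ on a subset of the tendril $T(R_j,r_j)$, $r_j^2\simeq |\lambda_j|/(\alpha\delta_j)$, of measure comparable to $|\lambda_j|/\alpha$. This dwarfs $|CR_j^*|\simeq|\lambda_j|/\beta_j$. These tendrils, whose total measure is still $\lesssim\alpha^{-1}\sum_j|\lambda_j|$, must go into the exceptional set; that is the role of Christ's tendrils. Likewise, the split by $|\lambda_j|/|R_j|\le\alpha$ does not give $\|g\|_2^2\lesssim\alpha\sum_j|\lambda_j|$: take $N$ copies of one atom with $|\lambda_j|=\alpha|R|$. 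You need a genuine stopping-time selection. For example, call $R_j$ bad iff $R_j\subset\{\sum_i|\lambda_i||R_i|^{-1}\one_{3R_i}>\alpha\}$; this makes the good part pointwise $\le\alpha$.

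The more serious problem is that, once the false gain is discarded, nothing in your argument uses the moment condition. The square function is built from $|A_ra_j|$, and the $TT^*$/transversality bound is stated for positive sums $\sum_j\lambda_j|R_j|^{-1}\one_{T_j}$, so you only use $|a_j|\le|R_j|^{-1}\one_{R_j}$. But every nonnegative $f\in L^1(\R^2)$ is dominated by some $\sum_j\lambda_j|P_j|^{-1}\one_{P_j}$ with dyadic parabolic boxes $P_j$ and $\sum_j\lambda_j\lesssim\|f\|_1$. To see this, dominate $f$ by a lower semicontinuous $g$ with $\|g\|_1\le 2\|f\|_1$ and decompose the open sets $\{g>2^k\}$ into dyadic parabolic boxes. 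Since $M_{\textnormal{par}}$ is monotone, your plan, if it worked, would prove that $M_{\textnormal{par}}$ is of weak type $(1,1)$ on all of $L^1(\R^2)$. That is far stronger than Christ's theorem and is not known; it is the parabolic counterpart of the open weak type $(1,1)$ problem for the lacunary spherical maximal function.

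The missing idea is precisely how $\int a_j=0$ enters the $L^2$ estimate off the tendril exceptional set, in combination with curvature. One way is to smooth the arc measure at an intermediate scale, where the vanishing mean gives decay, and control the rough remainder through the Fourier decay of the curved arc. Your proposal defers exactly this step.

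For comparison, the paper does not reprove Theorem~\ref{20260625thm01}. It quotes Christ and summarizes his method as a stopping-time construction of an exceptional set followed by an $L^2$ estimate away from it. It also stresses that its own summation device, Lemma~\ref{20260714prop22}, requires $p<1$.
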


Here $H_{\textnormal{par}}^1(\mathbb R^2)$ denotes the \emph{parabolic Hardy space}
associated with the anisotropic dilations
$$
\delta_r(x_1,x_2)=(r x_1, r^2 x_2), \qquad r>0.
$$
It is clear that the homogeneous dimension of the above dilation is $3$. Following Christ's definition, $H_{\textnormal{par}}^1(\mathbb R^2)$ is the
subspace of $L^1(\mathbb R^2)$ consisting of all functions $f$ admitting an
atomic representation
$$
f(x)=\sum_Q \lambda_Q a_Q(x),
$$
where
\begin{enumerate}
    \item a \emph{parabolic box} $Q$ is a rectangle in $\mathbb R^2$ of the form 
    $$
    Q=Q(z, \rho):=z+[-\rho, \rho] \times [-\rho^2, \rho^2]
    $$
    for $z=(z_1, z_2) \in \R^2$ and $\rho>0$; we
    write its side length by $\rho(Q)=\rho$;

    \item $a_Q$ is a $Q$-atom, meaning that
    $$
    \operatorname{supp} a_Q\subseteq Q,\qquad
    \|a_Q\|_{L^\infty(\mathbb R^2)}\le |Q|^{-1},
    \qquad
    \int_{\mathbb R^2} a_Q(x)\,dx=0;
    $$

    \item the coefficient sequence satisfies $\{\lambda_Q\}\in \ell^1$.
\end{enumerate}
The $H_{\textnormal{par}}^1(\R^2)$ norm is given by
$$
\|f\|_{H_{\textnormal{par}}^1(\R^2)}
=
\inf \sum_Q |\lambda_Q|,
$$
where the infimum is taken over all such atomic representations. Here, the rectangles $Q$ in the atomic representation are \emph{not} required to be disjoint.

\begin{rem}
We emphasize that the atoms in Christ's formulation are parabolic
$(1,\infty)$-atoms. This should be distinguished from the frequently used
$(1,2)$-atomic formulation, for which the corresponding atomic theory is
classical. More generally, for $0<p<1$, parabolic Hardy spaces admit
equivalent characterizations in terms of $(p,2)$-atoms and
Littlewood--Paley square functions; see, for instance, \cite{Calderon1977, LatterUchiyama1979, Bownik2003, Sato2018}.

By contrast, results formulated directly in terms of
$(p,\infty)$-atomic decompositions are far less common and are typically
more delicate, particularly for weak-type target spaces, since the
$L^2$-based orthogonality and square-function methods available in the
$(p,2)$ setting do \emph{not} apply in the same direct manner.
\end{rem}

\begin{rem}
The \emph{curvature} of the parabola $(t,t^2)$ plays an essential role in
Theorem~\ref{20260625thm01}. More broadly, this reflects a fundamental
and longstanding theme in harmonic analysis: the role of curvature in
the $L^p$ theory of curved operators; see, for instance, the classical and
influential survey \cite{SteinWainger1978}. We mention several representative directions.

\begin{enumerate}
    \item[(i)] \emph{Maximal and averaging operators.}
    The study of maximal and averaging operators associated with
    parabolic dilations, smooth curves, and curved hypersurfaces goes
    back to the works of Calder\'on and Torchinsky, Nagel, Rivi\`ere,
    Stein, and Wainger
    \cite{CalderonTorchinsky1975,
    NagelRiviereWainger1976Maximal,
    SteinWainger1976}.
    Subsequent developments include results for averages over convex
    hypersurfaces, $L^p$ regularity of averages over curves, and sharp
    bounds for more general curved maximal operators; see
    \cite{NagelSeegerWainger1993,
    PramanikSeeger2007,
    BeltranGuoHickmanSeeger2025}.

    \item[(ii)] \emph{Hilbert transforms and singular integrals along
    curves.}
    The $L^p$ theory of Hilbert transforms along curves and surfaces was
    initiated and systematically developed in
    \cite{NagelRiviereWainger1974,
    NagelRiviereWainger1976Hilbert,
    NagelWainger1976,
    NagelWainger1977}.
    More recent work has treated maximal families of Hilbert transforms
    along nonflat homogeneous curves; see, for example,
    \cite{GuoRoosSeegerYung2020}.

    \item[(iii)] \emph{Multilinear extensions.}
    Curvature and the corresponding nondegeneracy conditions also play
    an important role in multilinear harmonic analysis. Representative
    examples include multilinear oscillatory integrals
    \cite{ChristLiTaoThiele2005}, bilinear Hilbert transforms and
    bilinear maximal operators along curved polynomial trajectories
    \cite{Li2013, LiXiao2016}, and multilinear averaging and
    lacunary maximal operators associated with curved hypersurfaces
    \cite{ChoLeeShuin2024}.
\end{enumerate}

Much of the classical linear theory above concerns strong-type estimates
in the range $p>1$, whereas the endpoint $p=1$ is substantially more
delicate, as already reflected in the weak-type Hardy-space formulation
of Theorem~\ref{20260625thm01}.
\end{rem}

The \emph{goal} of this paper is twofold:
\begin{enumerate}
    \item First, we show that Theorem~\ref{20260625thm01} is sharp at
    $p=1$. More precisely, if $H_{\textnormal{par}}^1(\R^2)$ is naturally
    extended to its $H^p$ counterpart $H_{\textnormal{par}}^p(\R^2)$,
    as in Definition~\ref{def:Hparp}, then, for every $0<p<1$,
    $M_{\textnormal{par}}$ is not bounded from
    $H_{\textnormal{par}}^p(\R^2)$ to $L^{p,\infty}(\R^2)$;

    \item Second, we develop an atomic parabolic $H^p$ theory based on
$(p,\infty)$-atoms and adapted to the curvature and scaling inherent in
$M_{\textnormal{par}}$, thereby obtaining an extension of Christ's result
Theorem~\ref{20260625thm01} to the range $0<p<1$. 
\end{enumerate}

\medskip

 We begin by extending the definition of
\(H_{\textnormal{par}}^1(\mathbb R^2)\) to \(H_{\textnormal{par}}^p(\mathbb R^2)\),
\(0<p\le 1\), following the standard atomic formulation from the classical
theory of Hardy spaces.

\begin{defn} [Parabolic $(p, \infty)$-atom] \label{atomdefn}
Let $0<p \le 1$, and choose a positive integer
\begin{equation} \label{20260625eq01}
N_p \ge \left\lfloor 3\left(\frac{1}{p}-1 \right) \right\rfloor.
\end{equation} 
A \emph{parabolic \((p, \infty)\)-atom} is a bounded measurable function \(a\) for which there exists a parabolic box \(Q\) such that
\begin{align*}
&\supp \; a \subset Q, \\\
&\|a\|_{L^\infty(\R^2)}\le |Q|^{-\frac{1}{p}} \simeq \rho(Q)^{-\frac{3}{p}}, \\
&\int_{\R^2} x^\beta a(x)\,dx=0, \qquad |\beta|\le N_p. 
\end{align*}
Here $\beta=(\beta_1,\beta_2)\in\N^2$, $|\beta|=\beta_1+\beta_2$, and $x^\beta=x_1^{\beta_1}x_2^{\beta_2}$.
\end{defn}

\begin{defn}[Atomic parabolic Hardy space]\label{def:Hparp}
For $0<p\le 1$, define $H^p_{\textnormal{par}}(\mathbb R^2)$ to be the space of all
tempered distributions $f\in L_{\textrm{loc}}^1(\mathbb R^2)$ admitting a representation
\begin{equation}\label{eq:Hparp-rep}
        f=\sum_Q \lambda_Q a_Q
        \quad\text{in }  L_{\textnormal{loc}}^1(\mathbb R^2),
\end{equation}
where each $a_Q$ is a parabolic $(p, \infty)$-atom, and
$$
        \sum_Q |\lambda_Q|^p<\infty.
$$
Set
$$
        \|f\|_{H^p_{\textnormal{par}}}^p
        :=
        \inf \sum_Q |\lambda_Q|^p,
$$
where the infimum is taken over all representations \eqref{eq:Hparp-rep}.
\end{defn}

\begin{rem}
Recall that in the classical $H^p$ theory on $\R^2$ with standard Euclidean dilation, $f$ is assumed to belong to $\calS'(\R^2)$. Here, for simplicity and the well-defineness of the operator \eqref{20260625avgop}, we restricted our attention to $L_{\textrm{loc}}^1(\mathbb R^2)$.
\end{rem}

To this end, for $0<p<\infty$, we recall that the weak $L^p$ space $L^{p,\infty}(\mathbb R^2)$ is the space
of all measurable functions $f$ on $\mathbb R^2$ such that
$$
\|f\|_{L^{p,\infty}(\mathbb R^2)}:=\sup_{\alpha>0} \alpha\, \big|\{x\in\mathbb R^2: |f(x)|>\alpha\}\big|^{1/p}<\infty.
$$
Note that in the regime we are interested, namely, $0<p<1$, the above quantity becomes a quasi-norm.

\medskip

We are now ready to state the first main result of this paper.

\begin{thm}\label{mainthm00}
For every $0<p<1$, the parabolic maximal operator $M_{\textnormal{par}}$ is not bounded from
$H_{\textnormal{par}}^p(\mathbb R^2)$ to $L^{p,\infty}(\mathbb R^2)$.
\end{thm}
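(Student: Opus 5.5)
The plan is to disprove the boundedness with a single atom at a small scale $\rho$, tested against the single-scale average $A_1$. Since $M_{\textnormal{par}}a\ge |A_1a|$ pointwise, it suffices to show
$$
\alpha^p\,\big|\{x:|A_1 a(x)|>\alpha\}\big|\longrightarrow\infty \qquad (\rho\to0)
$$
for a suitable $\alpha=\alpha(\rho)$. Each such $a$ has $\|a\|_{H^p_{\textnormal{par}}}\le 1$, so this rules out any estimate $\|M_{\textnormal{par}}f\|_{L^{p,\infty}}\lesssim\|f\|_{H^p_{\textnormal{par}}}$.

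\textbf{Construction of the atom.} I will use a tensor-product atom adapted to the geometry of the curve. For $t\sim1$ the curve $(t,t^2)$ has slope $2t\sim1\gg\rho$, so it crosses the box $Q(0,\rho)=[-\rho,\rho]\times[-\rho^2,\rho^2]$ essentially vertically. Define
$$
a(x)=c\,\rho^{-3/p}\,h(x_1/\rho)\,g(x_2/\rho^2),
$$
with the following choices. Take $h\in C_c^\infty(-1,1)$ with $\int s^k h(s)\,ds=0$ for all $k\le N_p$, and $h$ not identically zero. Take $g\in C_c^\infty(-1,1)$ with $g\ge0$ and $\int g=1$. Then every moment $\int x^\beta a\,dx$ factors as a product whose $x_1$-factor is a moment of $h$ of order $\beta_1\le N_p$, hence vanishes. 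With $c$ small, $a$ is a parabolic $(p,\infty)$-atom. The point is that the cancellation is placed in the \emph{horizontal} variable, whereas the curve integrates essentially in the \emph{vertical} variable, so it does not see the cancellation.

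\textbf{Key computation.} Fix $x$ and consider
$$
A_1a(x)=\int a(x_1-t,\,x_2-t^2)\,\varphi(t)\,dt.
$$
The integrand is nonzero only on a $t$-window where $x_2-t^2\in[-\rho^2,\rho^2]$. Since $t\sim1$, this window has length $\sim\rho^2$. Across the window, $x_1-t$ moves by only $O(\rho^2)\ll\rho$, so $h((x_1-t)/\rho)$ is essentially constant, up to an error $O(\rho\|h'\|_\infty)$. Changing variables $u=(x_2-t^2)/\rho^2$, so that $dt=\rho^2\,du/(2t)$, gives
$$
A_1a(x)=c\,\rho^{2-3/p}\Big(\frac{\varphi(t_0)}{2t_0}\,h\big((x_1-t_0)/\rho\big)+O(\rho)\Big),
$$
where $t_0=\sqrt{x_2}$. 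Let $E$ be the set of $x$ with $t_0\in(1,2)$ (so $\varphi(t_0)=1$) and $|h((x_1-t_0)/\rho)|\ge\tfrac12\|h\|_\infty$. On $E$ we have $|A_1a|\gtrsim\rho^{2-3/p}=:\alpha$ once $\rho$ is small.

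\textbf{Measure of $E$.} For each $x_2\in(1,4)$, the admissible $x_1$ form a set of length $\sim\rho$ in $x_1-\sqrt{x_2}$. Hence $|E|\sim\rho$, and
$$
\alpha^p|E|\sim\rho^{2p-3}\cdot\rho=\rho^{2p-2}\to\infty \qquad (\rho\to0,\ 0<p<1).
$$
At $p=1$ this quantity stays bounded, which is consistent with Theorem~\ref{20260625thm01}.

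\textbf{Main obstacle.} The step needing care is the rigorous justification of the approximation $A_1a(x)\approx c\rho^{2-3/p}h(\cdot)/(2t_0)$. This requires a uniform error bound from the Taylor expansion of $h$ in the $t$-window, together with the Jacobian $1/(2t)$ being essentially constant over that window. It also requires checking that $\varphi$ equals $1$ on the window, which is why $t_0$ is restricted to $(1,2)$ with a margin of order $\rho^2$. Both are routine because $\rho^2\ll\rho\ll1$. Finally, the same construction shows that the result already fails for the single-scale operator $A_1$.
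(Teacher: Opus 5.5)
Your argument is correct. It follows the same overall scheme as the paper: one atom at scale $\rho$, the single-scale operator $A=A_1$, and a set of measure $\simeq\rho$ on which $|Aa|\gtrsim\rho^{2-3/p}$, giving $\rho^{2p-2}\to\infty$. The atom and the proof of the lower bound, however, are genuinely different.

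\textbf{The paper's route (separation by support).} The paper takes a positive indicator $b_{0,\rho}=\rho^{-3/p}\one_{B_{0,\rho}}$ on the thin central slab $(-2\eta\rho,2\eta\rho)\times(-\rho^2/2,\rho^2/2)$. It restricts attention to the swept set $E_\rho=\{(u,0)+(\tau,\tau^2):|u|<\eta\rho,\ \tau\in(5/4,7/4)\}$ and shows that the ``shadow'' of $E_\rho$ inside $Q_\rho$ lies in $\{|x_1|\le 2\eta\rho\}$. It then enforces the moment conditions by adding $D_{N_p}$ indicator boxes in the invisible region $x_1>4\eta\rho$. Their coefficients come from inverting a rescaled moment matrix, which is independent of $\rho$. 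On $E_\rho$ one gets the exact identity $A\mathfrak a_{Q_\rho}=C_*Ab_{0,\rho}$. So the lower bound follows from positivity and an incidence count, with no error terms, and it works with merely bounded (indicator) atoms.

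\textbf{Your route (separation by direction).} You put all the cancellation into the horizontal factor $h$. You then use that, for $t\sim1$, the curve crosses the $\rho\times\rho^2$ box with horizontal drift only $O(\rho^2)\ll\rho$. This gives two things:
\begin{enumerate}
\item the moment conditions for free, by factorization;
\item an explicit asymptotic $A_1a(x)=c\rho^{2-3/p}\bigl(\tfrac{\varphi(t_0)}{2t_0}h((x_1-t_0)/\rho)+O(\rho)\bigr)$.
\end{enumerate}
This avoids the linear-algebra step entirely and describes $A_1a$ completely, not just from below on one set. It also makes transparent what defeats the cancellation: the curve's steepness relative to the parabolic box at this scale mismatch. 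The price is an approximation argument, namely a modulus-of-continuity bound for $h$ over a rescaled drift of size $O(\rho)$, plus the near-constancy of $\varphi(t)/(2t)$ on the window. Both are routine here, as you say.

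\textbf{Two details to write out.}
\begin{enumerate}
\item Choose $h$ concretely, e.g.\ $h=\psi^{(N_p+1)}$ with $0\neq\psi\in C_c^\infty(-1,1)$; integration by parts then gives the vanishing moments up to order $N_p$.
\item Take the threshold $\alpha$ slightly below your lower bound, so that $E\subset\{|A_1a|>\alpha\}$ with strict inequality.
\end{enumerate}
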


Theorem~\ref{mainthm00} follows immediately from the following stronger single-scale result.

\begin{thm}\label{mainthm01}
Recall the single-scale operator
\begin{equation} \label{20260625singlescale}
Af(x)=A_1f(x)
:=\int_{\mathbb R} f(x_1-t,x_2-t^2)\varphi(t)\,dt.
\end{equation} 
Then, for every $0<p<1$, $A$ is not bounded from
$H_{\textnormal{par}}^p(\mathbb R^2)$ to $L^{p,\infty}(\mathbb R^2)$.
\end{thm}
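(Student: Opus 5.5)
The plan is to disprove boundedness with a single atom: an $L^\infty$-normalized atom supported on a very small parabolic box $Q$ of side $\delta$. We show that $\|Aa\|_{L^{p,\infty}}\gtrsim \delta^{2-2/p}$, which tends to $\infty$ as $\delta\to0$ when $p<1$. Since $\|a\|_{H^p_{\textnormal{par}}}\le 1$ for every atom, this rules out any bound $\|Af\|_{L^{p,\infty}}\lesssim\|f\|_{H^p_{\textnormal{par}}}$.

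\textbf{Construction.} Fix $h\in C_c^\infty(-1,1)$ with $\int u^j h(u)\,du=0$ for $0\le j\le N_p$ and $h\not\equiv0$. Fix also $k\in C_c^\infty(-1,1)$ with $\int k=1$. Normalize so that $\|h\|_\infty\|k\|_\infty\le 1$. For $0<\delta<1$ set
$$a_\delta(y)=\delta^{-3/p}\,h(y_1/\delta)\,k(y_2/\delta^2),$$
which is supported in $Q(0,\delta)$. Its moments vanish: $\int y^\beta a_\delta\,dy$ factors, and the $y_1$-factor $\int y_1^{\beta_1}h(y_1/\delta)\,dy_1$ is $0$ because $\beta_1\le|\beta|\le N_p$. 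So $a_\delta$ is a parabolic $(p,\infty)$-atom (up to a harmless constant from $|Q|\simeq\delta^3$).

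\textbf{Computing $Aa_\delta$.} For $x$ near the curve $\{(t,t^2):1<t<2\}$, the integrand $a_\delta(x_1-t,x_2-t^2)$ is nonzero only when $|x_2-t^2|<\delta^2$. Since $t\sim 1$, this confines $t$ to an interval of length $\sim\delta^2$ around $t_0=\sqrt{x_2}$, where $\varphi\equiv1$. On that interval $x_1-t$ varies by only $O(\delta^2)\ll\delta$, so $h((x_1-t)/\delta)=h((x_1-t_0)/\delta)+O(\delta)$. Substituting $s=t^2$, $dt=ds/(2t)$, gives
$$Aa_\delta(x)=\delta^{-3/p}\,\frac{\delta^2}{2t_0}\Big(h\big(\tfrac{x_1-t_0}{\delta}\big)+O(\delta)\Big).$$
Here we used $\int k=1$. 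The variation of $1/(2t)$ and the curvature of the parabola across a $\delta^2$-window only produce $O(\delta^2)$ relative errors.

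\textbf{Lower bound.} Choose an interval $I$ on which $|h|\ge c>0$. Let $E_\delta=\{x: \sqrt{x_2}\in(1.1,1.9),\ (x_1-\sqrt{x_2})/\delta\in I\}$, which has area $\simeq\delta$. On $E_\delta$ we have $|Aa_\delta|\gtrsim\delta^{2-3/p}$ for $\delta$ small. Taking $\alpha\simeq\delta^{2-3/p}$ then yields
$$\|Aa_\delta\|_{L^{p,\infty}}\gtrsim \delta^{2-3/p}\,\delta^{1/p}=\delta^{2-2/p}\to\infty.$$

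\textbf{Main obstacle.} The only delicate point is the error analysis in the second step. We must check that the cancellation built into $a_\delta$ is destroyed by the near-vertical slicing, i.e., that in rescaled coordinates the curve has slope $\sim\delta^{-1}$. This is why the vanishing moments are placed entirely in the $y_1$-variable, while the $y_2$-profile keeps nonzero mass. The approximation errors must also be shown to be lower order, uniformly on $E_\delta$.
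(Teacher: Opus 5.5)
Your proposal is correct. Its skeleton matches the paper's:
\begin{itemize}
\item a single atom on a parabolic box of side $\delta$ (the paper's $\rho$);
\item a strip of area $\simeq\delta$ along the curve on which $|Aa|\gtrsim\delta^{2-3/p}$, because only a $t$-window of length $\simeq\delta^2$ meets the box;
\item hence $\|Aa\|_{L^{p,\infty}}\gtrsim\delta^{2-2/p}\to\infty$.
\end{itemize}

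Where you genuinely differ is in how you reconcile the moment conditions with the lower bound.

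The paper starts from the normalized indicator $b_{0,\rho}$ of a thin box around $\Sigma_\rho$. It computes the ``shadow'' $\mathcal S_\rho$ of $E_\rho$ inside $Q_\rho$, which after parabolic rescaling is the strip $|X_1|\le 2\eta$. It then enforces all moments with indicator functions of correction boxes placed outside this shadow, with coefficients solved from a $\rho$-independent invertible moment matrix. The payoff is the exact identity $A\mathfrak a_{Q_\rho}=C_*Ab_{0,\rho}$ on $E_\rho$, with no error terms at all.

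You use the tensor atom $\delta^{-3/p}h(y_1/\delta)k(y_2/\delta^2)$ instead. Its moments vanish automatically because all the cancellation sits in $h$ and $\beta_1\le|\beta|\le N_p$. You exploit the same geometric fact from the dual side. After rescaling, the fibres of $A$ through the box are nearly vertical (slope $\sim\delta^{-1}$), so $A$ essentially integrates only the $y_2$-profile, where the atom has mass $\int k=1$.

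This buys a shorter and more transparent construction: no unisolvent nodes, no moment matrix, no shadow computation. It also gives an asymptotic formula $Aa_\delta(x)=\delta^{2-3/p}(2t_0)^{-1}h\big((x_1-t_0)/\delta\big)+O(\delta^{3-3/p})$, valid on a whole $\delta$-neighbourhood of the curve.

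The price is the error analysis over the $O(\delta^2)$ window, and you have set this up correctly:
\begin{itemize}
\item replacing $h((x_1-t)/\delta)$ by its value at $t_0$ costs $O(\delta)\cdot O(\delta^2)$ inside the bracket;
\item the variation of $1/(2\sqrt{s})$ costs $O(\delta^4)$;
\item the window stays inside $(1,2)$, where $\varphi\equiv1$.
\end{itemize}

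Finally, your tensor atom placed on $Q_1$ and tested against $A_R$ works verbatim (window $|t-t_0|\lesssim R^{-1}$, giving $A_Ra\simeq R^{-2}h(x_1-t_0)$). It therefore also supplies the correction-term details that the paper leaves to the reader in Proposition~\ref{20260709prop01}.
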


Our next goal is therefore to develop an appropriate atomic $H^p$ theory of
parabolic Hardy spaces for $0<p<1$ for the parabolic $(p, \infty)$-atoms, within which Christ's result admits a
natural extension. We begin with a brief overview.

\begin{enumerate}
    \item[$\bullet$] First, motivated by Theorem~\ref{mainthm01}, we observe
    that each parabolic box $Q$ carries a natural curvature-induced cost;
    see \eqref{20260704eq01}. This leads us to modify the definition of
    $H_{\textnormal{par}}^p(\R^2)$ and introduce the modified space
    $H_{\textnormal{par}}^{p,*}(\R^2)$; see
    Definition~\ref{Hparpstar}. When $p=1$, this modified space reduces
    to the original parabolic Hardy space
    $H_{\textnormal{par}}^1(\R^2)$.

    \vspace{0.1cm}

    \item[$\bullet$] Second, Proposition~\ref{20260709prop01}, which may be
    viewed as a large-scale counterpart of Theorem~\ref{mainthm01}, shows
    that modifying the domain space alone does \emph{not} suffice. This suggests
    that the target space $L^{p,\infty}(\R^2)$ must also be replaced by a
    space adapted to the underlying parabolic geometry. We call this new
    target the \emph{weak tendril space} $\calT^{p,\infty}(\R^2)$; see
    Definition~\ref{20260714defn01}. This space also recovers the original
    target at $p=1$, that is, 
    $$
        \calT^{1,\infty}(\R^2)
        =
        L^{1,\infty}(\R^2);
    $$
    see, Proposition~\ref{20260710prop01}.
\end{enumerate}

Our second main result is the following.

\begin{thm}\label{mainwholeX}
Let $0<p<1$. Then $M_{\textnormal{par}}$ is bounded from
$H_{\textnormal{par}}^{p,*}(\R^2)$ to
$\calT^{p,\infty}(\R^2)$.
\end{thm}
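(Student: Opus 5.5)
The plan is to reduce Theorem~\ref{mainwholeX} to a uniform estimate for single atoms and then assemble the pieces using the $p$-subadditivity of the weak tendril quasi-norm. The definitions of $H_{\textnormal{par}}^{p,*}(\R^2)$ (Definition~\ref{Hparpstar}) and $\calT^{p,\infty}(\R^2)$ (Definition~\ref{20260714defn01}) appear only after this point, so I will assume the following structure. An element of $H_{\textnormal{par}}^{p,*}$ is written $f=\sum_Q\lambda_Q a_Q$ with quasi-norm $\inf\sum_Q |\lambda_Q|^p\,c(Q)$, where $c(Q)$ is the curvature cost in \eqref{20260704eq01}. The space $\calT^{p,\infty}$ measures level sets $\{|F|>\alpha\}$ not by Lebesgue measure but by a covering quantity, namely the least total $p$-weighted size of parabolic "tendrils" (thin neighborhoods of parabolic arcs $\{(x_1+t,x_2+t^2)\}$ at scale $\rho$) needed to cover the set. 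At $p=1$ this covering quantity is comparable to Lebesgue measure.

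\textbf{Step 1 (single atom).} Fix a $(p,\infty)$-atom $a$ supported in $Q=Q(z,\rho)$. After a parabolic dilation and translation, under which $M_{\textnormal{par}}$ commutes with $\delta_r$ and both spaces are invariant, I may take $z=0$ and $\rho=1$. I then split $M_{\textnormal{par}}a\le \sup_{r\le C}|A_ra|+\sup_{r>C}|A_ra|$.
\begin{itemize}
\item For large $r$, I Taylor expand $\varphi(t/r)$ in the convolution kernel and use the $N_p$ vanishing moments. Writing $A_ra(x)=\int a(y)K_r(x-y)\,dy$, the kernel $K_r$ is a measure on the dilated parabola. The moments therefore give decay in $r$ only after smoothing, and that is exactly where the cost must enter.
\item Concretely, $A_ra$ is supported in the union over $t\sim r$ of translates of $Q$ along the parabola, which is a tendril of length $r$ and thickness $\sim 1$ in the normal direction (more precisely, the $Q$-neighborhood of an arc). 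On it, $|A_ra|\lesssim \min(1,r^{-1})\cdot(\text{moment gain})$.
\item For small $r$, $|A_ra|\le\|a\|_\infty$ on the enlarged box $CQ$, which contributes $|Q|$ to the tendril measure.
\end{itemize}
The level set $\{M a>\alpha\}$ is then contained in $CQ$ together with a single tendril whose length is determined by $\alpha$. Its tendril quasi-size should be $\lesssim c(Q)\alpha^{-p}$; this is precisely how $c(Q)$ and the tendril size are designed to match. The counterexample in Theorem~\ref{mainthm01} indicates that the tendril has Lebesgue measure larger than $\alpha^{-p}$, which is why plain $L^{p,\infty}$ fails.

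\textbf{Step 2 (summation).} For $f=\sum\lambda_Qa_Q$ I cannot simply add weak-type bounds. I will adapt Christ's argument: split the atoms according to whether $|\lambda_Q|\,|Q|^{-1/p}$ is above or below $\alpha$.
\begin{itemize}
\item For the large atoms, discard the union of their exceptional tendrils, whose total tendril quasi-size is $\lesssim \alpha^{-p}\sum|\lambda_Q|^pc(Q)$.
\item For the remaining atoms, use an $L^2$ (or $L^q$, $q>1$) bound for $M_{\textnormal{par}}$ on the sum of the good parts outside the exceptional set. Here Stein--Wainger $L^q$ boundedness of $M_{\textnormal{par}}$ is used together with the estimate $\|\sum\lambda_Qa_Q\chi\|_{L^q}^q\lesssim \alpha^{q-p}\sum|\lambda_Q|^p$, which holds for the truncated atoms by the usual $p<1\le q$ interpolation trick.
\item Then apply Chebyshev, noting that Lebesgue measure is dominated by tendril quasi-size (a box is a degenerate tendril).
\end{itemize}

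\textbf{Main obstacle.} I expect the main difficulty to be the intermediate regime, where the tail of $A_ra$ for many atoms must be controlled simultaneously and the sums of tails fall below the threshold only in an aggregate sense. For this I would first try Christ's original device: replace each atom by the atom smoothed at its own scale plus an error. The error is handled by the curved oscillatory $L^2$ estimate (the $TT^*$/Fourier decay $|\hat\sigma(\xi)|\lesssim|\xi|^{-1/2}$ for the parabola), with the curvature cost $c(Q)$ absorbing the loss when $p<1$ by giving an extra factor $\rho^{3(1-1/p)}$.
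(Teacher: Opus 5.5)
There are genuine gaps in both steps, and the route you set aside in Step 2 is the one that works.

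Your Step 1 does not establish the single-atom bound, and it relies on the wrong mechanism. What is needed is the pointwise estimate $M_{\textnormal{par}}a_Q(x)\lesssim\rho_Q^{2-3/p}R_Q(x)^{-2}$, where $R_Q(x)$ is the least $R\ge\rho_Q$ with $x\in T(Q,R)$. This follows from size and support alone, via curvature. Suppose $x-(t_0,t_0^2)\in Q$ with $t_0\simeq r\gg\rho_Q$. Then every other admissible $t$ satisfies $|t^2-t_0^2|\lesssim\rho_Q^2$, hence $|t-t_0|\lesssim\rho_Q^2/r$, and $r\simeq t_0\simeq R_Q(x)$. At unit scale this gives $|A_ra|\lesssim r^{-2}$ on the tendril, not the $\min(1,r^{-1})$ you write. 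With only $r^{-1}$ decay, $\{M_{\textnormal{par}}a>\alpha\}$ lies in a tendril of length $\alpha^{-1}$, and the resulting content is $\alpha^{-2p}$ rather than $\alpha^{-p}$. Vanishing moments cannot supply the missing factor. The operator $A_r$ integrates $a$ along a curve, so there is no smoothing to Taylor expand against. Indeed, the atom constructed in Section~\ref{Sec02} has all its moments zero yet saturates the size-only bound. With the correct estimate, $\{M_{\textnormal{par}}a_Q>\alpha\}\subset T(Q,R_\alpha)$ with $R_\alpha\simeq(\rho_Q^{2-3/p}/\alpha)^{1/2}$, whose content is at most $\rho_QR_\alpha^{2p}\lesssim\alpha^{-p}\rho_Q^{2p-2}\le\alpha^{-p}w_p(Q)$. (Also, neither space is dilation invariant: $C_p(\delta_rE)=r^{1+2p}C_p(E)$ and $w_p$ saturates at $\rho_Q=1$. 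Normalizing to $\rho=1$ therefore costs a factor $\rho^{2p-2}$.)

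Step 2 fails for $p<1$ because $C_p$ and Lebesgue measure are not comparable in either direction; at $p=1$ this is harmless since $C_1(E)\simeq|E|$. Chebyshev with an $L^q$ bound controls only $|E|$ for the good part $E$ of the level set, so you would need $C_p(E)\lesssim|E|$. That fails for small sets. For a parabolic box $B$ of side $\rho<1$, a three-case check gives $|T(Q,R)\cap B|\lesssim\rho^{2-2p}\rho_QR^{2p}$; it uses $|T(Q,R)|\simeq\rho_QR^2$ and the fact that every horizontal section of $T(Q,R)$ has length $\lesssim\rho_Q$. Hence $C_p(B)\gtrsim\rho^{1+2p}\gg|B|$. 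The inequality you do state, $|E|\lesssim C_p(E)$, is also false globally, since $|T(Q,R)|\simeq\rho_QR^2\gg\rho_QR^{2p}$ for $R\gg1$. Your truncated bound $\|\sum\lambda_Qa_Q\|_q^q\lesssim\alpha^{q-p}\sum|\lambda_Q|^p$ additionally needs disjoint supports; $N$ copies of a single atom violate it.

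The missing idea is that for $p<1$ weak-type bounds \emph{can} simply be added, as your opening sentence suggested. Because $C_p$ is defined through weighted covers, the functional $\Lambda_p(G):=\inf\{\sum_i\theta_i\rho_{Q_i}R_i^{2p}: G\le\sum_i\theta_i\one_{T(Q_i,R_i)}\text{ a.e.}\}$ is positively homogeneous and countably subadditive, with $\Lambda_p(\one_E)=C_p(E)$. Cut each $F_j$ at height $\alpha/2$ and dominate the truncated part $G_j$ by $\sum_{k\ge0}2^{-k}\alpha\,\one_{\{F_j>2^{-k-2}\alpha\}}$. This gives $\Lambda_p(G_j)\lesssim A_j\alpha^{1-p}\sum_k2^{-k(1-p)}$, which is finite exactly because $p<1$. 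Combining with $\one_{\{\sum_jG_j>\alpha\}}\le\alpha^{-1}\sum_jG_j$ yields $\|\sum_jF_j\|_{\calT^{p,\infty}}^p\lesssim_p\sum_jA_j$ whenever $C_p(\{F_j>t\})\le A_jt^{-p}$. Apply this to $F_j=|\lambda_j|M_{\textnormal{par}}a_{Q_j}$, together with $M_{\textnormal{par}}f\le\sum_jF_j$ a.e. For infinite sums, use $C_p(E)>0$ when $|E|>0$ to get $\sum_jF_j<\infty$ a.e.\ and then pass to the limit. This proves the theorem with no exceptional sets, no $L^2$ theory, and no Fourier decay of the parabola.
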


Theorem~\ref{mainwholeX} may be viewed as completing a classical
Hardy-space pattern in the presence of curvature. Let
$$
        M_\Phi f(x)
        :=
        \sup_{r>0}|f*\Phi_r(x)|,
        \qquad
        \Phi_r(x):=r^{-2}\Phi(x/r),
$$
be a standard smooth maximal operator characterizing the classical
Hardy spaces on $\R^2$, where $\Phi$ is a smooth bump in $\R^2$. The comparison can be summarized as follows:
$$
\begin{array}{c|c|c}
 & \textnormal{classical $H^p$ theory}
 & \textnormal{parabolic $H^p$ theory induced by $(p, \infty)$-atoms}
 \\[2mm] \hline
  \rule{0pt}{3ex}p=1
 &
 M_\Phi:H^1(\R^2)\longrightarrow L^1(\R^2)
 &
 M_{\textnormal{par}}:
 H_{\textnormal{par}}^1(\R^2)
 \longrightarrow
 L^{1,\infty}(\R^2)
 \quad\textnormal{(Theorem \ref{20260625thm01})}
 \\[3mm]
 0<p<1
 &
 M_\Phi:H^p(\R^2)\longrightarrow L^p(\R^2)
 & M_{\textnormal{par}}:
 H_{\textnormal{par}}^{p,*}(\R^2)
 \longrightarrow
 \calT^{p,\infty}(\R^2)
 \quad\textnormal{(Theorem~\ref{mainwholeX})}
\end{array}
$$
Thus, Theorem \ref{20260625thm01} established the $p=1$ part of the parabolic picture, while
Theorem~\ref{mainwholeX} supplies its curvature-adapted counterpart in
the range $0<p<1$.

\begin{rem}
Here, we emphasis that Theorem~\ref{mainwholeX} does \emph{not} include Christ's result at $p=1$.
Indeed, the proof of Theorem~\ref{mainwholeX} relies essentially on the assumption $p<1$; see Proposition \ref{20260714prop22}.

For $p=1$, the proof of Theorem \ref{20260625thm01} is based on a delicate stopping-time argument
that organizes the atomic pieces, constructs an appropriate exceptional
set, and establishes an $L^2$ estimate away from that set. This
overcomes the failure of the triangle inequality in
$L^{1,\infty}(\R^2)$. Our proof for $0<p<1$ follows a \emph{different} route.
The curvature of the parabola first provides a tendril covering of each
level set of $M_{\textnormal{par}}a_Q$, yielding a uniform estimate for
each atom in terms of the scale cost $w_p(Q)$. These single-atom
estimates are then combined through the subadditive structure of the
$p$-tendril content and the summation argument in
Lemma~\ref{20260714prop22}. Finally, Theorem~\ref{mainwholeX} follows from the more precise statement given in Theorem~\ref{mainwhole}.
\end{rem}

\vspace{0.1cm}

The rest of the paper is organized as follows. In
Section~\ref{Sec02}, we prove the sharpness of Christ's theorem by
constructing a parabolic $(p,\infty)$-atom for which the corresponding
weak $L^p$ estimate fails. In Section~\ref{Sec03}, we introduce the
modified parabolic Hardy space $H_{\textnormal{par}}^{p,*}(\R^2)$ and
show that modifying the domain space alone does not yield the desired
extension. Finally, in Section~\ref{Sec04}, we develop the theory of weak
tendril spaces and prove Theorem~\ref{mainwholeX}.

Throughout the paper, for nonnegative quantities $a$ and $b$, we write
$a\lesssim b$ if $a\leq Cb$ for some constant $C>0$ independent of $a$ and $b$. We write
$a\simeq b$ if both $a\lesssim b$ and $b\lesssim a$ hold. We also write
$x=(x_1,x_2)$, $X=(X_1,X_2)$, $y=(y_1,y_2)$, $Y=(Y_1,Y_2)$, and
$z=(z_1,z_2)$ for vectors in $\R^2$.
\\
\noindent{\bf Acknowledgement.}
The authors would like to thank Ji Li for bringing this problem to their attention during his visit to Auburn University in the fall of 2023. The second author was supported by the Simons Travel grant MPS-TSM-00007213.

\bigskip

 \section{Sharpness of Christ's estimate: Proof of Theorem \ref{mainthm01}} \label{Sec02}

We begin with an overview of the proof. Fix $0<p<1$, and recall that $N_p$ denotes the positive integer defined in \eqref{20260625eq01}. For $0<\rho<1$, write
$$
Q_\rho:=Q(0, \rho)=[-\rho, \rho] \times [-\rho^2, \rho^2]. 
$$
Throughout the proof, $\rho>0$ will be regarded as a sufficiently small parameter. Our goal is to construct a parabolic $(p, \infty)$-atom ${\mathfrak a}_{Q_\rho}$ and a measurable set $E_\rho \subseteq \R^2$, satisfying 
\begin{align}
        |E_\rho|&\gtrsim \rho, \label{20260625goal01}\\
        \left|A {\mathfrak a}_{Q_\rho}(x) \right| &\gtrsim \rho^{2-\frac{3}{p}},
        \qquad x\in E_\rho. \label{20260625goal02}
\end{align}
Temporarily assuming both \eqref{20260625goal01} and \eqref{20260625goal02}, then 
\begin{align} \label{20260704eq01}
\left\|A{\mathfrak a}_{Q_\rho} \right\|_{L^{p, \infty}(\R^2)}
&=\sup_{\alpha>0} \alpha \left| \left\{ x\in \R^2: \left|A{\mathfrak a}_{Q_\rho}(x) \right|>\alpha \right\} \right|^{\frac{1}{p}}  \nonumber \\
& \ge \rho^{2-\frac{3}{p}} \cdot \left|E_\rho \right|^{\frac{1}{p}} \nonumber \\ 
& \ge \rho^{2-\frac{3}{p}} \cdot \rho^{\frac{1}{p}} \nonumber  \\
&= \rho^{2-\frac{2}{p}}.
\end{align}
Since $0<p<1$, we have
$$
\left\|A{\mathfrak a}_{Q_\rho} \right\|_{L^{p, \infty}(\R^2)} \to \infty \qquad \textrm{as} \qquad \rho \to 0, 
$$
which concludes the proof of Theorem \ref{mainthm01}. 

\begin{rem}
The above computation shows that $p=1$ is the threshold case: when $0<p<1$, the construction gives a counterexample, while at $p=1$ it no longer does so. This is consistent with Christ's theorem
$$
M_{\varphi}:H^1_{\textnormal{par}}(\mathbb R^2)\to L^{1,\infty}(\mathbb R^2),
$$
and shows that this endpoint result is sharp in the scale of atomic parabolic Hardy spaces.
\end{rem}

It remains to prove \eqref{20260625goal01} and \eqref{20260625goal02}. Before doing so, we first explain the motivation behind the construction.

\medskip 

\noindent {$\diamond$ \bf Heuristic.} The curvature of $(t, t^2)$ in the single-scale
operator $A$ is crucial here. Let $0<\eta<10^{-3}$, and consider the
horizontal segment
$$
\Sigma_{\rho}:=\{(u,0): |u|<\eta\rho\}\subset Q_\rho.
$$
Now take
\begin{equation} \label{20260625comp01}
x=(u,0)+(\tau,\tau^2),
\qquad |u|<\eta\rho,\quad
\tau\in \supp \; \varphi\subset \left(\frac12,\frac52\right).
\end{equation} 
Then for any parabolic $(p, \infty)$-atom $a_{Q_\rho}$, one has 
$$
Aa_{Q_\rho}(x)=\int_{\mathbb R}
a_{Q_\rho}(u+\tau-t,\tau^2-t^2)\varphi(t)\,dt.
$$
If the point $(u+\tau-t,\tau^2-t^2)$ belongs to $Q_\rho$, then, since both $t$
and $\tau$ are of magnitude $1$, then
$$
|\tau^2-t^2|\lesssim \rho^2,
\qquad\text{and hence}\qquad
|t-\tau|\lesssim \rho^2.
$$
Consequently,
$$
(u+\tau-t,\tau^2-t^2)=(u+O(\rho^2),O(\rho^2)).
$$
Thus, for such choices of $x$, the operator $A$ only sees the values of
$a_{Q_\rho}$ in a thin horizontal slab inside $Q_\rho$, namely a small
$O(\rho^2)$-neighborhood of the segment $\Sigma_{\rho}$, denoted by $N_{\rho^2} \left(\Sigma_{\rho} \right)$, rather than
the whole box $Q_\rho$ (see Figure \ref{Fig2}).

\begin{figure}[ht]
\centering
\begin{tikzpicture}[x=1.9cm,y=0.78cm,>=stealth,font=\small]

\def\rr{0.9}      
\def\ee{0.3}      
\pgfmathsetmacro{\er}{\ee*\rr}   

\pgfmathsetmacro{\slabh}{0.82*\rr*\rr}
\pgfmathsetmacro{\slabw}{\er+0.08}

\draw[->] (-1.35,0) -- (3.35,0) node[right] {$x_1$};
\draw[->] (0,-1.0) -- (0,6.95) node[above] {$x_2$};

\draw[line width=0.9pt] (-\rr,-\rr*\rr) rectangle (\rr,\rr*\rr);
\node[font=\footnotesize] at (-1.1, 0.55) {$Q_\rho$};

\fill[red!18]
(-\slabw,-\slabh) rectangle (\slabw,\slabh);
\draw[red!55!black,line width=0.7pt]
(-\slabw,-\slabh) rectangle (\slabw,\slabh);

\draw[red!85!orange,line width=1.5pt] (-\er,0) -- (\er,0);
\fill[red!85!orange] (-\er,0) circle (1.2pt);
\fill[red!85!orange] (\er,0) circle (1.2pt);
\node[red!85!orange,font=\footnotesize] at (.6,-0.45) {$\Sigma_{\rho}$};

\filldraw[black] (0.06,0) circle (1.2pt);
\node[font=\footnotesize,below] at (.65,3) {$(u,0)$};
\draw[dashed,->] (0.6,2.3) -- (0.06, 0.1);

\fill[blue!12]
plot[domain=0.5:2.45,samples=140] ({\x-\er},{\x*\x})
--
plot[domain=2.45:0.5,samples=140] ({\x+\er},{\x*\x})
-- cycle;

\draw[blue!70!black,line width=1.0pt]
plot[domain=0.5:2.45,samples=140] ({\x-\er},{\x*\x});
\draw[blue!70!black,line width=1.0pt]
plot[domain=0.5:2.45,samples=140] ({\x+\er},{\x*\x});

\draw[blue!70!black,dashed,line width=0.8pt]
plot[domain=0:2.45,samples=140] ({\x},{\x*\x});

\node[blue!70!black,font=\footnotesize,align=left]
at (2.6, 6.5)
{$\Sigma_{\rho}+\{(t,t^2):\,t\in \supp\varphi\}$};

\filldraw[black] (1.68,2.78) circle (1.5pt);
\node[font=\footnotesize,right] at (2,2.78)
{$x=(u,0)+(\tau,\tau^2)$};

\draw[dashed,->] (2.05,2.78) -- (1.73, 2.78);

\node[red!60!black,font=\scriptsize,align=left]
at (-1, 1.9)
{$N_{\rho^2} \left(\Sigma_{\rho} \right)$};

\draw[dashed,->] (-0.88,1.52) -- (-0.37, 0.1);

\end{tikzpicture}
\caption{The segment $\Sigma_{\rho}\subset Q_\rho$, and the thin horizontal
slab inside $Q_\rho$ seen by $A$.}
\label{Fig2}
\end{figure}

The above computation will guide the proof of Theorem~\ref{mainthm01}. First,
the admissible region for $x$ defined in \eqref{20260625comp01} may be viewed
as a thin parabolic slab of width $\simeq \eta\rho$, which is exactly the
geometric content of our first goal \eqref{20260625goal01}.

Second, once $x$ lies in the region \eqref{20260625comp01}, the preceding
incidence computation shows that only the values of $a_{Q_\rho}$ inside the
$\rho^2$-neighborhood $N_{\rho^2}\left(\Sigma_{\rho}\right)$
can contribute to $Aa_{Q_\rho}(x)$. This suggests that we should first place
a positive bump inside $N_{\rho^2}(\Sigma_{\rho})$, so that the lower
bound in \eqref{20260625goal02} holds.

Finally, this bump alone is not necessarily a $(p,N_p)$--atom, since it need
not satisfy the required moment conditions in Definition \ref{atomdefn}. We therefore add correction terms
supported in the remaining part of $Q_\rho$, away from
$N_{\rho^2}(\Sigma_{\rho})$. These correction terms are used to enforce
the moment cancellations, while being chosen so that they do not affect the
lower bound coming from the bump constructed in the previous step.

\vspace{0.1cm}

Let us now turn to the detailed argument. 

\subsection{Step I: Construction of $E_\rho$ that satisfies \eqref{20260625goal01}} \label{20260705subsec01}

Following the above heuristic, let us take a $0<\eta \ll 1$, and denote
$$
\Sigma_{\rho}:=\left\{(u, 0): |u|<\eta \rho \right\} \subset Q_\rho
$$
as above. Define now
$$
E_\rho:=\left\{(u, 0)+(\tau, \tau^2): |u|<\eta \rho, \; \tau \in \left(\frac{5}{4}, \frac{7}{4} \right) \right\}. 
$$

We estimate the size of $E_\rho$ as below. 

\begin{lem}\label{20260625lem01}
For $\rho>0$ being sufficiently small, $|E_\rho|\gtrsim \rho$.
\end{lem}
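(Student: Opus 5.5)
We compute $|E_\rho|$ directly via a change of variables. The set $E_\rho$ is the image of the rectangle
$$
R_\rho:=(-\eta\rho,\eta\rho)\times\left(\tfrac54,\tfrac74\right)
$$
under the map
$$
\Phi(u,\tau):=(u+\tau,\tau^2).
$$
The Jacobian matrix of $\Phi$ is
$$
\begin{pmatrix} 1 & 1\\ 0 & 2\tau\end{pmatrix},
$$
so its determinant is $2\tau\in(\tfrac52,\tfrac72)$.

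We next check that $\Phi$ is injective on $R_\rho$. Suppose $\Phi(u,\tau)=\Phi(u',\tau')$. The second coordinate gives $\tau^2=\tau'^2$, and since $\tau,\tau'>0$ this forces $\tau=\tau'$. The first coordinate then gives $u=u'$.

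Since $\Phi$ is a smooth injective map on $R_\rho$ with nonvanishing Jacobian, the change of variables formula applies:
$$
|E_\rho|=\int_{R_\rho}2\tau\,du\,d\tau .
$$
Using $2\tau\ge\tfrac52$ on $R_\rho$, this is at least
$$
\tfrac52\cdot 2\eta\rho\cdot\tfrac12=\tfrac52\,\eta\rho .
$$
Since $\eta$ is a fixed absolute constant, we get $|E_\rho|\gtrsim\rho$. The same computation gives the matching upper bound $|E_\rho|\le\tfrac72\eta\rho$, although it is not needed.

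An alternative that avoids invoking the change of variables theorem is Fubini in the $x_2$ variable. For each fixed $x_2=\tau^2$ with $\tau\in(\tfrac54,\tfrac74)$, the horizontal slice of $E_\rho$ is the interval $(\tau-\eta\rho,\tau+\eta\rho)$, which has length $2\eta\rho$. The range of $x_2$ is $(\tfrac{25}{16},\tfrac{49}{16})$, of length $\tfrac32$. Integrating the slice lengths gives
$$
|E_\rho|=3\eta\rho,
$$
which is again $\gtrsim\rho$.

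The only point needing care is the injectivity, equivalently the fact that each slice is a single interval. Both follow from $\tau\mapsto\tau^2$ being monotone for $\tau>0$. Neither argument uses the hypothesis that $\rho$ is small; it matters only for later steps.
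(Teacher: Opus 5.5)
Your proof is correct and follows the paper's argument: the same parametrization $(u,\tau)\mapsto(u+\tau,\tau^2)$, the Jacobian $2\tau\simeq 1$, and the change of variables formula, which give $|E_\rho|\gtrsim\rho$. Your injectivity check (from $\tau^2=\tau'^2$ with $\tau,\tau'>0$) is actually a better justification than the paper's, which treats injectivity as a consequence of the nonvanishing Jacobian alone; your Fubini alternative yielding $|E_\rho|=3\eta\rho$ exactly is also correct.
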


\begin{proof}
Parametrize \(E_\rho\) by
$$
        \Phi_\rho(u, \tau)=(u,0)+(\tau, \tau^2)=(u+\tau, \tau^2),
        \qquad |u|<\eta\rho,\  \tau\in \left(\frac{5}{4}, \frac{7}{4} \right).
$$
The Jacobian is
$$
        D\Phi_\rho(u,\tau)=
        \begin{pmatrix}
        1&1\\
        0&2\tau
        \end{pmatrix},
$$
and hence
$$
        |\det D\Phi_\rho(u,\tau)|=2\tau \simeq 1.
$$
As a consequence, the map $\Phi_\rho: (u, \tau) \mapsto (u+\tau, \tau^2)$ is injective on $(-\eta\rho,\eta\rho)\times (5/4, 7/4)$.  Therefore, by a change of variable, 
$$
|E_\rho|=\int_{5/4}^{7/4} \left(\int_{-\eta\rho}^{\eta\rho}du \right) 2sds
        \gtrsim \rho.
$$
The proof is complete.
\end{proof}

\subsection{Step II: Construction of a single bump near $\Sigma_{\rho}$ that satisfies \eqref{20260625goal02}} \label{20260705subsec02}

Denote 
$$
B_{0, \rho}:=(-2\eta \rho, 2\eta \rho) \times \left(-\frac{\rho^2}{2}, \frac{\rho^2}{2} \right) \subset Q_\rho.
$$
We have the following level--set estimate. 

\begin{lem} \label{20260626lem01}
For $\rho>0$ being sufficiently small and $x \in E_\rho$, one has
\begin{equation} \label{20260625eq32}
\left| \left\{t \in \left(1, 2 \right): x-(t, t^2) \in B_{0, \rho} \right\} \right| \gtrsim \rho^2.
\end{equation} 
\end{lem}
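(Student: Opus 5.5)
Write $x\in E_\rho$ as $x=(u+\tau,\tau^2)$ with $|u|<\eta\rho$ and $\tau\in(5/4,7/4)$. The plan is to compute the two coordinates of $x-(t,t^2)$ explicitly and exhibit an interval of $t$, centered at $\tau$, of length comparable to $\rho^2$, on which both coordinate constraints defining $B_{0,\rho}$ hold. Since $\tau\in(5/4,7/4)$ has distance at least $1/4$ from the endpoints of $(1,2)$, any interval around $\tau$ of length $\lesssim\rho^2\ll 1/4$ lies inside $(1,2)$ once $\rho$ is small. Hence the constraint $t\in(1,2)$ is harmless.

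The computation is
$$
x-(t,t^2)=\bigl(u+\tau-t,\ \tau^2-t^2\bigr)=\bigl(u+(\tau-t),\ (\tau-t)(\tau+t)\bigr).
$$
Set $s=t-\tau$ and restrict to $|s|\le c\rho^2$ with a small absolute constant $c$ to be chosen.

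For the second coordinate, $t\in(1,2)$ and $\tau<2$ give $\tau+t\le 4$. Therefore $|\tau^2-t^2|\le 4c\rho^2<\rho^2/2$ provided $c<1/8$.

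For the first coordinate, $|u+\tau-t|\le |u|+|s|<\eta\rho+c\rho^2$. This is less than $2\eta\rho$ as soon as $c\rho^2\le\eta\rho$, i.e. $\rho\le \eta/c$. That holds for $\rho$ sufficiently small, with $\eta$ fixed.

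Thus the whole interval $\{t:|t-\tau|\le c\rho^2\}$ is contained in the set in \eqref{20260625eq32}. Its measure is $2c\rho^2\gtrsim\rho^2$, with implicit constant depending only on $c$, which is absolute, and not on $\eta$.

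There is no genuine obstacle here. The only point requiring care is the order of the smallness conditions on $\rho$ relative to the fixed parameter $\eta$: we need $\rho\lesssim\eta$ for the horizontal constraint, and $\rho^2\ll 1/4$ to stay inside $(1,2)$. Both are covered by the hypothesis ``$\rho$ sufficiently small.''

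The approach reflects the curvature heuristic preceding the lemma. Because the vertical side of $B_{0,\rho}$ has length $\rho^2$ and the parabola $(t,t^2)$ has slope $2t\simeq 1$ near $t\simeq\tau$, the time the curve spends in the box is of order $\rho^2$. The horizontal side, of length $\eta\rho$, is never the binding constraint.
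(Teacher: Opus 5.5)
Your proof is correct and follows essentially the same route as the paper: you exhibit the interval $|t-\tau|\le c\rho^2$, use the smallness conditions $c\rho^2\le\eta\rho$ and $c\rho^2<1/4$, and check both coordinate constraints of $B_{0,\rho}$. The only cosmetic difference is in the vertical coordinate, which you bound via the factorization $(\tau-t)(\tau+t)$ with $\tau+t<4$, whereas the paper expands it as $-2\tau h-h^2$ and chooses $c_1$ with $\tfrac72 c_1+c_1^2<\tfrac12$.
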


\begin{proof}
Let $x\in E_\rho$. This means that there exists $u\in\R$ and $\tau \in(5/4,7/4)$ such that
\begin{equation} \label{20260625eq31}
        x=(u,0)+(\tau, \tau^2),
\end{equation} 
with $|u|<\eta \rho$ and $\tau \in (5/4, 7/4)$. 

\vspace{0.1cm}

Choose $c_1>0$ sufficiently small so that $\frac{7}{2}c_1+c_1^2<\frac{1}{2}$. We shall also assume $\rho>0$ is sufficiently small so that
$$
        c_1\rho^2<\frac{1}{4},
        \qquad
        c_1\rho^2\le \eta\rho,
        \qquad
        \rho<1.
$$
Then, since $ \tau \in(5/4,7/4)$, we have
$$
        [\tau-c_1\rho^2, \; \tau+c_1\rho^2]\subset \left(1, 2 \right).
$$

Now take $t=\tau+h$ with
\begin{equation} \label{20260625eq30}
        |h|\le c_1\rho^2.
\end{equation} 
Then
\begin{align}  \label{20260625eq35}
x-(t, t^2)
&=(u,0)+(\tau, \tau^2)-\left(\tau+h, (\tau+h)^2 \right) \nonumber \\
&=(u-h,-2\tau h-h^2).
\end{align}
Our goal is to prove that, for every fixed $x \in E_\rho$ defined in
\eqref{20260625eq31}, one has
\begin{equation} \label{20260626eq03}
    x-(t,t^2)\in B_{0,\rho}
\end{equation} 
whenever $t=\tau+h$ with $h$ satisfying \eqref{20260625eq30}. Consequently, the bound $|h|\le c_1\rho^2$ immediately yields \eqref{20260625eq32}.

\vspace{0.1cm}

We first estimate the first coordinate in \eqref{20260625eq35}. Since $|u|<\eta\rho$ and
$|h|\le c_1\rho^2$, we have
\begin{equation} \label{20260626eq01}
|u-h| \le |u|+|h|<\eta\rho+c_1\rho^2 \le 2\eta\rho,
\end{equation} 
where we have used the fact that $\rho>0$ is sufficiently small. Next, we estimate the second coordinate in \eqref{20260625eq35}. Since $\tau \in(5/4,7/4)$, we have
$\tau<7/4$. Hence
\begin{align} \label{20260626eq02}
|-2\tau h-h^2|
&\le 2\tau |h|+|h|^2 \le \frac{7}{2} c_1\rho^2+c_1^2\rho^4 \nonumber \\
& \le \left(\frac{7}{2} c_1+c_1^2\right)\rho^2 <\frac{\rho^2}{2}.
\end{align}
Combining the estimates \eqref{20260626eq01} and \eqref{20260626eq02} yields \eqref{20260626eq03}. 

\medskip 

The proof is complete. 
\end{proof}

As an application of Lemma \ref{20260626lem01}, we construct a bump function supported near $\Sigma_{\rho}$ for which the analogue of \eqref{20260625goal02} holds, with the $(p,N_p)$-atom appearing there replaced by this bump function.
Indeed, put
\begin{equation} \label{20260626eq50}
b_{0, \rho}(x):=\rho^{-\frac{3}{p}} \one_{B_{0, \rho}}(x). 
\end{equation} 
It is clear that
\begin{enumerate}
    \item [$\bullet$] $\supp \; b_{0, \rho} \subseteq Q_\rho$;
    \item [$\bullet$] $\left\|b_{0, \rho}\right\|_{L^\infty(\R)} \le \rho^{-3/p} \simeq |Q|^{-1/p}$.
\end{enumerate}
Moreover, for any $x \in E_\rho$, one has
\begin{align} \label{20260626eq50X}
\left|Ab_{0, \rho}(x) \right| 
&=\left| \int_{\R} b_{0, \rho}(x-(t, t^2)) \varphi(t)dt \right| \nonumber \\
&=\rho^{-\frac{3}{p}} \int_{\R} \one_{B_{0, \rho}}(x-(t, t^2)) \varphi(t) dt \nonumber \\
&=\rho^{-\frac{3}{p}} \int_1^2 \one_{B_{0, \rho}}(x-(t, t^2)) dt \nonumber \\
& \gtrsim \rho^{-\frac{3}{p}} \cdot \rho^2=\rho^{2-\frac{3}{p}}, 
\end{align}
which is exactly \eqref{20260625goal02} with the parabolic $(p, \infty)$-atom there replaced by the bump function $b_{0, \rho}$. However, in general, $b_{0,\rho}$ does \emph{not} satisfy the moment conditions in Definition \ref{atomdefn}. Therefore, in the final step, we need to add suitable correction terms in order to enforce these moment conditions.

\subsection{Step III: Correction terms} \label{20260705subsec03}

The \emph{key} idea in constructing the desired $(p, \infty)$--atom is the following
observation.  For any $a$ being a parabolic $(p, \infty)$-atom and $x\in E_\rho$, only a $O(\rho^2)$-neighborhood of
$\Sigma_{\rho}$ can contribute to $Aa(x)$.  Therefore, we may place
the correction terms in the part of $Q_\rho$ which is invisible from
$E_\rho$.  These correction terms will force the required moment
conditions, but will not affect the lower bound already obtained from bump $b_{0, \rho}$ defined as in \eqref{20260626eq50}. We now turn to some details. 

\medskip 

Recall first that 
$$
E_\rho:=\left\{(u, 0)+(\tau, \tau^2): |u|<\eta \rho, \; \tau \in \left(\frac{5}{4}, \frac{7}{4} \right) \right\}. 
$$
Motivated by the inherited geometry of the single--scale operator $A$, it is natural to consider the \emph{shadow} of $E_\rho$ inside $Q_\rho$ by
\begin{align} \label{20260626shadow}
\mathcal S_\rho
&:= \left\{x-(t, t^2): x \in E_\rho, \; t \in \supp \; \varphi \right\} \cap Q_{\rho} \nonumber \\
&=\left\{(u,0)+(\tau,\tau^2)-(t,t^2): |u|<\eta\rho,\ 
        \tau\in\left(\frac54,\frac74\right),\
        t\in\supp\varphi \right\}
        \cap Q_\rho .
\end{align}
Thus, $Q_\rho\setminus\mathcal S_\rho$ does not
contribute to $Aa(x)$ for $x\in E_\rho$. 

\vspace{0.1cm}

Let $\mathcal P_{N_p}$ be the vector space of polynomials in two variables
of total degree at most $N_p$, and let
$$
        D_{N_p}:=\dim\mathcal P_{N_p}
        =
        \frac{(N_p+1)(N_p+2)}{2}.
$$
We shall use $D_{N_p}$ correction boxes to solve a $D_{N_p}$ moment
equations.

\begin{lem}
\label{20260626lem02}
Let $\eta>0$ be sufficiently small.  Then, for all sufficiently small $\rho>0$, there exist pairwise disjoint parabolic boxes
$$
        B_{1,\rho},\ldots,B_{D_{N_p},\rho}
        \subset Q_\rho\setminus\mathcal S_\rho
$$
whose side lengths are comparable to $\rho$, such that the following
holds.  Put
$$
        b_{k,\rho}:=\rho^{-\frac3p}\one_{B_{k,\rho}},
        \qquad 1\le k\le D_{N_p}.
$$
Then the moment matrix
\begin{equation}\label{20260626normmoment}
\mathfrak M_{\beta,k}:=
\rho^{-\left(3-\frac3p\right)-(\beta_1+2\beta_2)}
\int_{\R^2} x^\beta b_{k,\rho}(x)\,dx, \qquad |\beta|\le N_p,\ 1\le k\le D_{N_p},
\end{equation}
is invertible, and its inverse is bounded by a constant depending only on
$N_p$ and $\eta$, but not on $\rho$.

Moreover, there exist coefficients
$c_1,\ldots,c_{D_{N_p}}$, independent of the choice of $\rho$, such that
\begin{equation}\label{20260626momentsolve}
        \int_{\R^2}
        x^\beta
        \left(
        b_{0,\rho}
        +
        \sum_{k=1}^{D_{N_p}}c_{k}b_{k,\rho}
        \right)
        dx=0,
        \qquad |\beta|\le N_p.
\end{equation}
\end{lem}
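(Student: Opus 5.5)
The plan is to rescale to the unit box and reduce everything to a fixed, $\rho$-independent configuration. Under the parabolic dilation $x=\delta_\rho(y)=(\rho y_1,\rho^2 y_2)$ we have $x^\beta=\rho^{\beta_1+2\beta_2}y^\beta$ and $dx=\rho^3dy$. If $B_{k,\rho}=\delta_\rho(B_k)$ for a fixed box $B_k\subset Q_1$, then
$$
\mathfrak M_{\beta,k}=\rho^{-(3-\frac3p)-(\beta_1+2\beta_2)}\rho^{-\frac3p}\rho^{3+\beta_1+2\beta_2}\int_{B_k}y^\beta\,dy=\int_{B_k}y^\beta\,dy .
$$
So the normalized moment matrix does not depend on $\rho$ at all. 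It therefore suffices to choose fixed boxes $B_1,\dots,B_{D_{N_p}}\subset Q_1$, with $\delta_\rho(B_k)\subset Q_\rho\setminus\mathcal S_\rho$ for all small $\rho$, such that the matrix $\big(\int_{B_k}y^\beta dy\big)_{\beta,k}$ is invertible. The dilates $\delta_\rho(B_k)$ have side lengths $\simeq\rho$; they are not exactly parabolic boxes of the form $Q(z,\rho')$ unless we take the $B_k$ to be parabolic boxes in $Q_1$, which we do, e.g.\ $B_k=Q(z_k,r)$ with $r$ small.

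The first step is to identify the shadow in rescaled coordinates. A point of $\mathcal S_\rho$ has the form $(u+\tau-t,\tau^2-t^2)$. Lying in $Q_\rho$ forces $|\tau^2-t^2|\le\rho^2$, hence $|\tau-t|\lesssim\rho^2$, and therefore the first coordinate satisfies $|u+\tau-t|\le\eta\rho+C\rho^2$. After dilation by $\delta_\rho^{-1}$, this says that $\delta_\rho^{-1}(\mathcal S_\rho)$ is contained in the vertical strip $\{|y_1|\le 2\eta\}\times[-1,1]$ for $\rho$ small. Consequently, any box $B_k\subset Q_1$ with $B_k\cap\{|y_1|\le 3\eta\}=\emptyset$ satisfies $\delta_\rho(B_k)\subset Q_\rho\setminus\mathcal S_\rho$. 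The same bound shows $B_{0,\rho}$ is disjoint from the $B_{k,\rho}$.

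The second step, which I expect to be the only real obstacle, is invertibility. Choose $D_{N_p}$ distinct points $z_k$ in the open region $\{3\eta<|y_1|<1\}\times(-1,1)$ that are unisolvent for $\mathcal P_{N_p}$, meaning the evaluation matrix $(z_k^\beta)$ is invertible. Such points exist because the set of non-unisolvent $D_{N_p}$-tuples is the zero set of a nonzero polynomial (the Vandermonde-type determinant) and so cannot contain an open set; a small perturbation of any choice works. Then take $B_k=Q(z_k,r)$. As $r\to0$,
$$
|B_k|^{-1}\int_{B_k}y^\beta\,dy\to z_k^\beta ,
$$
so by continuity of the determinant, the matrix with entries $\int_{B_k}y^\beta dy$ equals a diagonal scaling by the volumes $|B_k|\simeq r^3$ times a matrix close to $(z_k^\beta)$. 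For small fixed $r$ this matrix is invertible, and for $r$ small the boxes are disjoint and contained in the admissible region. All of these choices depend only on $N_p$ and $\eta$, so the inverse of $\mathfrak M$ is bounded independently of $\rho$.

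Finally, for the coefficients, the same dilation gives
$$
\rho^{-(3-\frac3p)-(\beta_1+2\beta_2)}\int x^\beta b_{0,\rho}\,dx=\int_{B_0}y^\beta\,dy=:v_\beta ,
$$
where $B_0=(-2\eta,2\eta)\times(-\tfrac12,\tfrac12)$ is independent of $\rho$. Multiplying \eqref{20260626momentsolve} by the normalizing factor turns it into the linear system $v+\mathfrak M c=0$. Hence $c=-\mathfrak M^{-1}v$ is independent of $\rho$, with $|c_k|\lesssim_{N_p,\eta}1$, which completes the plan.
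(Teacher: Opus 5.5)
Your proposal is correct and follows essentially the same route as the paper. Both arguments rescale by $\delta_\rho$, which makes the normalized moment matrix and the normalized moments of $b_{0,\rho}$ fixed, $\rho$-independent quantities; both confine $\delta_\rho^{-1}(\mathcal S_\rho)$ to the strip $\{|X_1|\le 2\eta\}$; and both place small, equal-measure parabolic boxes around unisolvent points away from that strip, so that the moment matrix is a small perturbation of a multiple of the invertible evaluation matrix. Your nonvanishing-determinant argument for the existence of unisolvent points justifies a step the paper simply asserts.
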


\begin{proof}
Let $z\in\mathcal S_\rho$.  Then
$$
        z=(z_1, z_2)=(u,0)+(\tau,\tau^2)-(t,t^2)
        =
        (u+\tau-t,\tau^2-t^2),
$$
where
$$
        |u|<\eta\rho,
        \qquad
        \tau\in\left(\frac54,\frac74\right),
        \qquad
        t\in\supp \; \varphi.
$$
Since $z\in Q_\rho$, we have
$$
        |u+\tau-t|\le \rho,
        \qquad
        |\tau^2-t^2|\le \rho^2.
$$
Since $\tau\in(5/4,7/4)$ and $t\in\supp\varphi\subset(1/2,5/2)$, we have
$$
        |\tau+t|\simeq 1.
$$
Therefore the second inequality gives
$$
        |\tau-t|
        =
        \frac{|\tau^2-t^2|}{|\tau+t|}
        \lesssim \rho^2.
$$
Consequently,
$$
        |z_1|
        =
        |u+\tau-t|
        \le |u|+|\tau-t|
        \le \eta\rho+O(\rho^2)
        \le 2\eta\rho
$$
for all sufficiently small $\rho$.  Hence, after applying the anisotropic
rescaling
$$
        \delta_\rho^{-1}(x_1,x_2)
        =
        (\rho^{-1}x_1,\rho^{-2}x_2),
$$
we have
\begin{equation}\label{20260626shadowstrip}
        \delta_\rho^{-1}(\mathcal S_\rho)
        \subset
        \{(X_1, X_2)\in[-1,1]^2: |X_1|\le 2\eta\}.
\end{equation}

Now choose $\eta>0$ sufficiently small so that the open set
$$
        U:=\{(X_1, X_2): 4\eta<X_1<1/2,\ -1/2<X_2<1/2\}
$$
is non-empty.  By \eqref{20260626shadowstrip}, $U$ is disjoint from
$\delta_\rho^{-1}(\mathcal S_\rho)$ for all sufficiently small $\rho$.

Choose $D_{N_p}$ distinct points
$$
        Z_1,\ldots,Z_{D_{N_p}}\in U
$$
such that the matrix
$$
        \left[ \left(Z_k\right)^\beta\right]_{|\beta|\le N_p,\ 1\le k\le D_{N_p}}
$$
is invertible.  

Next, choose pairwise disjoint parabolic boxes
$$
        \widetilde B_1,\ldots,\widetilde B_{D_{N_p}} \subset U
$$
centered at $Z_1,\ldots,Z_{D_{N_p}}$, compactly contained in $U$, respectively, and sufficiently small. we choose these parabolic boxes to have the same fixed measure.  If the
boxes are chosen small enough, then by continuity, the matrix
\begin{equation} \label{20260626matrix01}
        \left[
        \int_{\widetilde B_k}X_1^{\beta_1}Y^{\beta_2}\,dX_1\,dX_2
        \right]_{|\beta|\le N_p,\ 1\le k\le D_{N_p}}
\end{equation} 
is arbitrarily close to a fixed non-zero multiple of
$\left[ \left(Z_k \right)^\beta\right]_{|\beta|\le N_p,\ 1\le k\le D_{N_p}}$. Hence the matrix \eqref{20260626matrix01} is invertible, and in particular, its determinant is \emph{independent} of the choice of $\rho$. 

\vspace{0.1cm}

Now define
$$
        B_{k,\rho}:=\delta_\rho(\widetilde B_k),
        \qquad 1\le k\le D_{N_p}.
$$
Then $B_{k,\rho}\subset Q_\rho\setminus\mathcal S_\rho$, the boxes are
pairwise disjoint, and their side lengths are comparable to $\rho$.
Moreover, by the change of variables
$$
        x_1=\rho X_1,
        \qquad
        x_2=\rho^2X_2,
$$
we obtain
\begin{equation} \label{20260626eq45}
        \mathfrak M_{\beta,k}=\rho^{-\left(3-\frac3p\right)-(\beta_1+2\beta_2)}
        \int_{\R^2}x^\beta b_{k,\rho}(x)\,dx
        =
        \int_{\widetilde B_k}X_1^{\beta_1}X_2^{\beta_2}\,dX_1\,dX_2.
\end{equation} 
Thus the normalized moment matrix in \eqref{20260626normmoment} is exactly the
 matrix defined as in \eqref{20260626matrix01}.  Hence it is invertible, with inverse norm
independent of $\rho$.

It remains to solve the moment equations.  Since
$$
        B_{0,\rho}
        =
        (-2\eta\rho,2\eta\rho)
        \times
        \left(-\frac{\rho^2}{2},\frac{\rho^2}{2}\right),
$$
by a similar change variable argument in \eqref{20260626eq45}, the moments
\begin{equation} \label{20260626eq46}
\rho^{-\left(3-\frac3p\right)-(\beta_1+2\beta_2)}
\int_{\R^2}x^\beta b_{0,\rho}(x)\,dx 
=\int_{(-2\eta, 2\eta) \times \left(-\frac{1}{2}, \frac{1}{2}\right)}  X_1^{\beta_1}X_2^{\beta_2} dX_1dX_2
\end{equation} 
are bounded uniformly in $\rho$.  Consider now the finite-dimensional linear system
$$
        \sum_{k=1}^{D_{N_p}}c_{k,\rho}\mathfrak M_{\beta,k}
        =
        -
        \rho^{-\left(3-\frac3p\right)-(\beta_1+2\beta_2)}
        \int_{\R^2}x^\beta b_{0,\rho}(x)\,dx,
        \qquad |\beta|\le N_p.
$$
Then the change variables \eqref{20260626eq45} and \eqref{20260626eq46} reduces the above linear system to
$$
\sum_{k=1}^{D_{N_p}} c_{k, \rho} \left(\int_{\widetilde B_k}X_1^{\beta_1}X_2^{\beta_2}\,dX_1\,dX_2 \right)=-\int_{(-2\eta, 2\eta) \times \left(-\frac{1}{2}, \frac{1}{2}\right)}  X_1^{\beta_1}X_2^{\beta_2} dX_1dX_2, \qquad |\beta| \le N_p.
$$
The above linear system is solvable since its coefficient matrix
\eqref{20260626matrix01} is invertible.  Moreover, both the coefficient matrix
and the right-hand side are independent of $\rho$.
Therefore, for all sufficiently small $\rho$, the coefficients may be chosen
independently of $\rho$.  In other words, we may write
$c_{k,\rho}=c_k$ for $1\le k\le D_{N_p}$.  The proof is complete.
\end{proof}

We now define the corrected atom.  Let $c_{1},\ldots,c_{D_{N_p}}$
be the coefficients obtained in Lemma \ref{20260626lem02}.  Set
\begin{equation}\label{20260626atomdef}
        {\mathfrak a}_{Q_\rho}(x)
        :=
        C_*
        \left(
        b_{0,\rho}(x)
        +
        \sum_{k=1}^{D_{N_p}}c_{k}b_{k,\rho}(x)
        \right),
\end{equation}
where $C_*>0$ is a sufficiently small constant depending only on $p$,
$N_p$, and $\eta$,

\begin{lem}\label{20260626lem03}
The function ${\mathfrak a}_{Q_\rho}$ is a parabolic $(p, \infty)$-atom supported in
$Q_\rho$.  Moreover, for every $x\in E_\rho$,
\begin{equation}\label{20260626lowerbound}
        A {\mathfrak a}_{Q_\rho}(x)\gtrsim \rho^{2-\frac3p}.
\end{equation}
\end{lem}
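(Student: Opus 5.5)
The proof splits into two parts: first verifying the three atom conditions for ${\mathfrak a}_{Q_\rho}$, then proving the lower bound by showing that the correction terms are invisible from $E_\rho$.

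\emph{Support.} Each $B_{k,\rho}$, $0\le k\le D_{N_p}$, lies in $Q_\rho$: for $k=0$ this is immediate from the definition of $B_{0,\rho}$ (since $2\eta<1$), and for $k\ge1$ it is part of Lemma~\ref{20260626lem02}. Hence $\supp\,{\mathfrak a}_{Q_\rho}\subseteq Q_\rho$.

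\emph{Size.} The boxes $B_{0,\rho},B_{1,\rho},\dots,B_{D_{N_p},\rho}$ are pairwise disjoint. For $k\ge1$ this comes from the lemma. For $B_{0,\rho}$ versus $B_{k,\rho}$, note that $\delta_\rho^{-1}(B_{0,\rho})\subset\{|X_1|<2\eta\}$, while $\widetilde B_k\subset U\subset\{X_1>4\eta\}$. On each box, ${\mathfrak a}_{Q_\rho}$ is therefore a single term $C_*c_k\rho^{-3/p}$ (with $c_0=1$), so
$$\|{\mathfrak a}_{Q_\rho}\|_\infty\le C_*\max(1,|c_1|,\dots,|c_{D_{N_p}}|)\,\rho^{-3/p}.$$
The constants $c_k$ are independent of $\rho$, and $|Q_\rho|^{-1/p}=(4\rho^3)^{-1/p}$. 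Choosing $C_*$ small, depending only on the $c_k$ and $p$, gives $\|{\mathfrak a}_{Q_\rho}\|_\infty\le|Q_\rho|^{-1/p}$.

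\emph{Moments.} The vanishing moments for $|\beta|\le N_p$ follow directly from \eqref{20260626momentsolve} by linearity, since multiplying by $C_*$ preserves them.

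\emph{Lower bound.} This is the main step, and it reduces to proving $Ab_{k,\rho}(x)=0$ for $x\in E_\rho$ and $k\ge1$. Write
$$Ab_{k,\rho}(x)=\rho^{-3/p}\int \one_{B_{k,\rho}}(x-(t,t^2))\varphi(t)\,dt.$$
The integrand is nonzero only when $t\in\supp\varphi$ and $x-(t,t^2)\in B_{k,\rho}\subset Q_\rho$. In that case $x-(t,t^2)\in\mathcal S_\rho$ by the definition \eqref{20260626shadow}, contradicting $B_{k,\rho}\subset Q_\rho\setminus\mathcal S_\rho$. Hence the integrand vanishes identically, and
$$A{\mathfrak a}_{Q_\rho}(x)=C_*\,Ab_{0,\rho}(x)\gtrsim\rho^{2-3/p}$$
by \eqref{20260626eq50X}. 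The implicit constant depends on $C_*$, $c_1$ from Lemma~\ref{20260626lem01}, and $\eta$, but not on $\rho$.

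The only delicate point I anticipate is bookkeeping. One must confirm that the smallness of $C_*$ and the lower bound constant are both $\rho$-independent, which holds because the $c_k$ are fixed. One must also ensure disjointness of $B_{0,\rho}$ from the correction boxes, which holds because $U$ was chosen with $X_1>4\eta$.
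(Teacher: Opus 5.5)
Your proposal is correct and follows the paper's proof. Support and moments come straight from the construction and \eqref{20260626momentsolve}, and the sup bound uses the $\rho$-independence of the $c_k$ together with a small $C_*$. The lower bound holds because the correction boxes lie in $Q_\rho\setminus\mathcal S_\rho$ and so contribute nothing for $x\in E_\rho$, which leaves $C_*Ab_{0,\rho}(x)$, controlled by \eqref{20260626eq50X}. Your extra check that $B_{0,\rho}$ is disjoint from the correction boxes is harmless but unnecessary: the paper simply uses the triangle inequality $\|\mathfrak a_{Q_\rho}\|_\infty\le C_*\bigl(1+\sum_k|c_k|\bigr)\rho^{-3/p}$.
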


\begin{proof}
The support of $ {\mathfrak a}_{Q_\rho}$ is contained in $Q_\rho$.  The moment conditions
follow directly from \eqref{20260626momentsolve}.

We next check the $L^\infty$ normalization.  Since the coefficients
$c_{k}$ are uniformly bounded and independent of the choice of $\rho$, and since
$$
        \|b_{0,\rho}\|_\infty\le \rho^{-\frac3p},
        \qquad
        \|b_{k,\rho}\|_\infty\le \rho^{-\frac3p},
        \qquad 1\le k\le D_{N_p},
$$
we have
$$
        \|{\mathfrak a}_{Q_\rho}\|_\infty
        \lesssim
        C_*\rho^{-\frac3p}.
$$
Choosing $C_*>0$ sufficiently small gives
$$
        \|{\mathfrak a}_{Q_\rho}\|_\infty
        \le |Q_\rho|^{-1/p}.
$$
Thus $a_{Q_\rho}$ is a parabolic $(p,N_p)$-atom.

It remains to prove the lower bound.  Fix $x\in E_\rho$.  Since the
correction boxes $B_{1,\rho},\ldots,B_{D_{N_p},\rho}$ are contained in
$Q_\rho\setminus\mathcal S_\rho$, they do \emph{not} contribute to $A{\mathfrak a}_{Q_\rho}(x)$ for $x \in E_\rho$.  Therefore, following the argument in \eqref{20260626eq50X}, we deduce that 
$$
        |A{\mathfrak a}_{Q_\rho}(x)|
        =
        C_*
        \left| \int_{\R}
        b_{0,\rho}(x-(t,t^2))\varphi(t)\,dt \right| \gtrsim \rho^{2-\frac{3}{p}}.
$$
This proves \eqref{20260626lowerbound}.
\end{proof}

The proof of Theorem \ref{mainthm01} is complete. 

\medskip 

\section{The modified parabolic Hardy space $H_{\textnormal{par}}^{p, \infty}(\R^2)$ and the failure of $M_{\textnormal{par}}: H_{\textnormal{par}}^{p, *}(\R^2) \to L^{p, \infty}(\R^2) $} \label{Sec03}

The example constructed above in Theorem \ref{mainthm01} reveals an important
scale-dependent cost:
$$
        \left\|M_{\textnormal{par}}{\mathfrak a}_{Q_\rho}\right\|_{L^{p,\infty}(\R^2)}^p
        \gtrsim \rho^{2p-2};
$$
see \eqref{20260704eq01}. This suggests that, in order to develop a suitable
parabolic $H^p$ theory for $0<p<1$, one should incorporate this scale cost into
the definition of the parabolic $H^p$ spaces. Now we turn to some details. 

\vspace{0.1cm}

For $0<p\le 1$ and for any parabolic box $Q\subset\R^2$, define the \emph{parabolic
scale cost of $Q$} by
$$
        w_p(Q):=\bigl[\min\{1,\rho_Q\}\bigr]^{2p-2}.
$$
Here, the reason for us to use $\min\{1,\rho_Q\}$ is that in the proof of
Theorem \ref{mainthm01}, the scale $\rho$ only plays a role when it is
sufficiently small.

\begin{defn}[Modified atomic parabolic Hardy space] \label{Hparpstar}
Let $0<p\le 1$. The space $H_{\textnormal{par}}^{p, *}(\R^2)$ consists of all $f\in L_{\textrm{loc}}^1(\R^2)$
which admit a representation
\begin{equation} \label{20260705eq01}
        f=\sum_{Q} \lambda_Q a_{Q} \qquad \textrm{in $L_{\textnormal{loc}}^1(\R^2)$}
\end{equation} 
where each $a_{Q}$ is a parabolic $(p,\infty)$-atom supported in a parabolic box $Q$,
and
$$
        \sum_{Q} |\lambda_Q|^p w_p(Q)<\infty.
$$
The quasi-norm is defined by
$$
\|f\|_{H_{\textnormal{par}}^{p, *}(\R^2)}
:=\inf \left(\sum_{Q} |\lambda_Q|^p w_p(Q) \right)^{1/p},
$$
where the infimum is taken over all representations in \eqref{20260705eq01}. In particular, if $p=1$, then $H_{\textnormal{par}}^{1, *}(\R^2) \cap L^1(\R^2)$ coincides with $H_{\textnormal{par}}^1(\R^2)$.
\end{defn}

It turns out that the space $H_{\textnormal{par}}^{p,*}(\R^2)$ alone does
\emph{not} suffice to extend the parabolic $H^p$ theory to the range
$0<p<1$. There is an additional natural scaling built into the maximal operator
$M_{\textnormal{par}}$, and this scaling is reflected in the next result.

\begin{prop} \label{20260709prop01}
For every $0<p<1$, the parabolic maximal operator $M_{\textnormal{par}}$ is not bounded from $H_{\textnormal{par}}^{p, *}(\R^2)$ to $L^{p, \infty}(\R^2)$. 
\end{prop}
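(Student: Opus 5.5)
The plan is to exploit the anisotropic dilation invariance of $M_{\textnormal{par}}$ to move the small-scale counterexample of Theorem~\ref{mainthm01} to scale $1$. At scale $1$ the cost $w_p$ equals $1$, so the penalty built into $H_{\textnormal{par}}^{p,*}(\R^2)$ no longer helps. Since $M_{\textnormal{par}}$ has no intrinsic scale, the blow-up $\rho^{2-2/p}$ should survive the rescaling.

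\emph{Step 1: rescaling.} For $R>0$ and $f\in L^1_{\textnormal{loc}}(\R^2)$, set $f_R(x):=R^{-3/p}f(\delta_R^{-1}x)$. The change of variables $t=Rs$ gives
$$
A_r(f\circ\delta_R^{-1})(x)=A_{r/R}f(\delta_R^{-1}x),
$$
so taking the supremum over $r>0$ yields $M_{\textnormal{par}}f_R(x)=R^{-3/p}M_{\textnormal{par}}f(\delta_R^{-1}x)$. Since $|\delta_R E|=R^3|E|$, we get
$$
\bigl|\{M_{\textnormal{par}}f_R>\alpha\}\bigr|=R^3\bigl|\{M_{\textnormal{par}}f>\alpha R^{3/p}\}\bigr|,
$$
and therefore $\|M_{\textnormal{par}}f_R\|_{L^{p,\infty}}=\|M_{\textnormal{par}}f\|_{L^{p,\infty}}$. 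On the atom side, if $a$ is a parabolic $(p,\infty)$-atom on $Q(z,\rho)$, then $a_R$ is supported in $\delta_R Q(z,\rho)=Q(\delta_R z,R\rho)$. It satisfies $\|a_R\|_\infty\le R^{-3/p}|Q|^{-1/p}=|\delta_RQ|^{-1/p}$. The vanishing moments up to order $N_p$ are preserved, because $x^\beta\circ\delta_R$ is a monomial of the same multi-index. So $a_R$ is again an atom.

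\emph{Step 2: the counterexample.} For small $\rho$, take the atom ${\mathfrak a}_{Q_\rho}$ from Lemma~\ref{20260626lem03} and let $R=1/\rho$. Then $f^{(\rho)}:=({\mathfrak a}_{Q_\rho})_{1/\rho}$ is a single parabolic $(p,\infty)$-atom supported in $Q_1=Q(0,1)$. Since $w_p(Q_1)=1$, the representation with one coefficient $\lambda=1$ gives $\|f^{(\rho)}\|_{H^{p,*}_{\textnormal{par}}}\le 1$ uniformly in $\rho$. On the other hand, $M_{\textnormal{par}}g\ge|A_1g|=|Ag|$ pointwise. Combining this with Step~1 and \eqref{20260704eq01},
$$
\|M_{\textnormal{par}}f^{(\rho)}\|_{L^{p,\infty}}=\|M_{\textnormal{par}}{\mathfrak a}_{Q_\rho}\|_{L^{p,\infty}}\ge\|A{\mathfrak a}_{Q_\rho}\|_{L^{p,\infty}}\gtrsim\rho^{2-\frac2p}\longrightarrow\infty
$$
as $\rho\to0$, because $p<1$. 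This contradicts any bound $\|M_{\textnormal{par}}f\|_{L^{p,\infty}}\lesssim\|f\|_{H^{p,*}_{\textnormal{par}}}$.

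\emph{Main obstacle.} The only point needing care is the dilation identity for $M_{\textnormal{par}}$. It holds because the curve $(t,t^2)$ is homogeneous under $\delta_R$ and the supremum runs over all $r>0$. This is exactly why the argument fails for the single-scale operator $A$, and why Proposition~\ref{20260709prop01} is stated for $M_{\textnormal{par}}$: the small-scale cost is converted into a large-scale one on which $w_p$ imposes no penalty. Equivalently, the rescaled atom $f^{(\rho)}$ has lower bound $\gtrsim\rho^{-1}$ times its $L^\infty$ scale on the set $\delta_{1/\rho}E_\rho$, whose measure is $\gtrsim\rho^{-2}$. This shows that the level sets spread out along large, thin parabolic tendrils, which motivates changing the target space.
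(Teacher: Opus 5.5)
Your proof is correct, and it produces the same counterexample as the paper by a different mechanism. The paper does not invoke scaling. It redoes the construction of Section~\ref{Sec02} by hand at unit scale: it fixes $Q_1$, replaces $A$ by $A_R$ with $R\gg1$, and checks that $|E_1(R)|\gtrsim R^2$ and $|A_R\mathfrak a_{Q_1}|\gtrsim R^{-2}$ on $E_1(R)$, leaving Claim~I and the correction terms to the reader.

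You instead use that $f\mapsto R^{-3/p}f\circ\delta_R^{-1}$ maps parabolic $(p,\infty)$-atoms on $Q$ to atoms on $\delta_RQ$, preserving the exact normalization and all moments, while leaving $\|M_{\textnormal{par}}f\|_{L^{p,\infty}(\R^2)}$ unchanged. The proposition then follows from \eqref{20260704eq01} and $M_{\textnormal{par}}\ge|A_1|$, with no new geometry.

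The two arguments coincide after dilation:
\begin{itemize}
\item $\delta_{1/\rho}E_\rho=E_1(1/\rho)$, and $A_1$ turns into $A_{1/\rho}$.
\item Since $B_{k,\rho}=\delta_\rho(\widetilde B_k)$ with $\rho$-independent $c_k$, your $f^{(\rho)}=C_*\bigl(\one_{(-2\eta,2\eta)\times(-1/2,1/2)}+\sum_k c_k\one_{\widetilde B_k}\bigr)$ does not depend on $\rho$. It is an atom of exactly the paper's type $\mathfrak a_{Q_1}$, with the same main bump.
\end{itemize}
So both proofs in fact show that $M_{\textnormal{par}}$ of a single fixed unit-scale atom lies outside $L^{p,\infty}(\R^2)$.

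Your route gains rigor and economy, since everything is inherited from the already verified Section~\ref{Sec02}. The paper's explicit large-scale picture instead shows the cost $R^{2-2p}$ of the thin set $E_1(R)$, which motivates the weight $R_i^{2p}$ in the tendril content.

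There is one slip in your closing remark. On $\delta_{1/\rho}E_\rho$ you get $M_{\textnormal{par}}f^{(\rho)}\gtrsim\rho^{2}$, not $\rho^{-1}$ times the $L^\infty$ scale. The latter is impossible, since $M_{\textnormal{par}}g\le 2\|g\|_\infty$. Combined with measure $\gtrsim\rho^{-2}$, the correct bound gives exactly $\rho^{2-2/p}$.
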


\begin{proof}
The proof of this result is similar to that of Theorem \ref{mainthm01}. However, we now reverse the philosophy there for the size of the parabolic
box and the support condition on $\varphi$. More precisely, now we shall consider the parabolic box of the unit size:
$$
Q_1=[-1, 1] \times [-1, 1],
$$
and the average operator 
$$
A_R f(x):=\int_{\R} f(x_1-t, x_2-t^2) \varphi \left(\frac{t}{R} \right) \frac{dt}{R}
$$
with $R \gg 1$. 

\vspace{0.1cm}

Following the construction in Theorem \ref{mainthm01}, we take a $0<\eta \ll 1$, and denote
$$
\Sigma_1:=\left\{(u, 0): |u|<\eta \right\} \subset Q_1
$$
and define
$$
E_1(R):=\left\{(u, 0)+(R\tau, R^2\tau^2): |u|<\eta, \tau \in \left(\frac{5}{4}, \frac{7}{4} \right) \right\}. 
$$
Observe first that for any parabolic $(p, \infty)$-atom $a_{Q_1}$ associated to $Q_1$, one has 
$\left\|a_{Q_1} \right\|_{H_{\textnormal{par}}^{p, *}(\R^2)} \le 1$, as the parabolic scale cost does not see large scale, that is, $w_p(Q_1)=1$.

\vspace{0.1cm}

The desired failure of boundedness of $M_{\textnormal{par}}: H_{\textnormal{par}}^{p, *}(\R^2) \to L^{p, \infty}(\R^2)$ now follows from the following two claims.
For $R\gg 1$, we have:

\medskip 

\noindent{{\bf Claim I:}} $|E_1(R)| \gtrsim R^2$.

\medskip 

\noindent{{\bf Claim II:}} There exists a parabolic $(p, \infty)$-atom ${\mathfrak a}_{Q_1}$, such that for any $x \in E_1(R)$, 
$$
\left| A_R {\mathfrak a}_{Q_1}(x) \right| \gtrsim R^{-2}. 
$$

\medskip 

Temporarily assume the above claims. Then 
\begin{align} \label{20260709eq01}
\left\|A_R{\mathfrak a}_{Q_1} \right\|_{L^{p, \infty}(\R^2)}
&=\sup_{\alpha>0} \alpha \left| \left\{x \in \R^2: \left|A_R {\mathfrak a}_{Q_1}(x) \right|>\alpha \right\} \right|^{\frac{1}{p}} \nonumber \\
& \ge R^{-2} \cdot \left|E_1(R) \right|^{\frac{1}{p}}\nonumber   \\
&\gtrsim R^{\frac{2}{p}-2}. 
\end{align}
This clearly gives the desired claim by letting $R \to \infty$.

\vspace{0.1cm}

Therefore, we are left with proving the above claims. {\bf Claim I} follows from a direct computation, using an argument similar to
the one in Section \ref{20260705subsec01}. Therefore, we focus only on {\bf Claim II}. 
Take now 
$$
x=(u, 0)+(R\tau, R^2 \tau^2) \in E_1(R), \quad \textrm{with} \quad |u|<\eta, \quad \tau \in \left(\frac{5}{4}, \frac{7}{4} \right). 
$$
Then for any parabolic $(p, \infty)$-atom $a_{Q_1}$, we have
$$
A_R a_{Q_1}(x)=\int_{\R} a_{Q_1}(x_1-t, x_2-t^2)\varphi \left(\frac{t}{R} \right) \frac{dt}{R}. 
$$
Hence, by the localization of the above operator, 
\begin{equation} \label{20260705eq10}
\left|x_1-t \right|=\left|u+R\tau-t \right|<1
\end{equation} 
and
\begin{equation} \label{20260705eq11}
\left|x_2-t^2 \right|=\left|R^2 \tau^2-t^2 \right|<1. 
\end{equation} 
Since $\tau \in (5/4, 7/4)$ and $t \simeq R$, \eqref{20260705eq11} then gives
\begin{equation} \label{20260705eq12}
\left|R\tau-t \right|=\frac{\left|R^2\tau^2-t^2\right|}{\left|R\tau+t \right|} \lesssim \frac{1}{R}.
\end{equation} 
This together with \eqref{20260705eq10} gives that the shadow of $E_1(R)$ locates in the strip
$$
\mathfrak S:=\left\{(u_1, u_2) \in Q_1: |u_1|< 2 \eta, \; |u_2|< 1 \right\}.
$$
Therefore, to prove the desired parabolic $(p, \infty)$-atom ${\mathfrak a}_{Q_1}$ satisfying {\bf Claim II}, one can follow the construction in Sections \ref{20260705subsec02} and \ref{20260705subsec03} by first construct a bump function $b_{0, 1}$ near $\mathfrak S$ and then put correction terms away from $\mathfrak S$. More precisely, write
$$
\widetilde{\mathfrak S}:=\left\{(u_1, u_2) \in Q_1: |u_1|< 2 \eta, \; |u_2|< 1/2 \right\},
$$
and define $b_{0, 1}:=\one_{\widetilde{\mathfrak S}}$. Then for any $x \in E_1(R)$, we have to estimate 
\begin{align*}
\left|A_R b_{0, 1}(x) \right|
&=\int_{R} \one_{b_{0, 1}}(x_1-t, x_2-t^2) \varphi \left(\frac{t}{R} \right) \frac{dt}{R} \\
& \ge \frac{1}{R} \cdot \int_{R}^{2R} \one_{b_{0, 1}}(x_1-t, x_2-t^2) dt \\
& \ge \frac{1}{R}  \cdot \left| \left\{ t \in [R, 2R]: (x_1-t, x_2-t^2) \in \widetilde{\mathfrak S} \right\} \right|
\end{align*}
To estimate the size of the last level set, following the argument in \eqref{20260705eq10}--\eqref{20260705eq12}, we see that for fixed $x \in E_1(R)$, one has
$$
\left| \left\{ t \in [R, 2R]: (x_1-t, x_2-t^2) \in \widetilde{\mathfrak S} \right\} \right| \gtrsim \frac{1}{R},
$$
and hence $|A_R b_{0, 1}(x)| \gtrsim 1/R^2$, which gives the correct size estimate as in {\bf Claim II}. Finally, the estimate for the correction terms follows from an argument similar
to the one in Section \ref{20260705subsec03}; we leave the details to the
interested reader.
\end{proof}

\medskip 

\section{Weak tendril spaces and an extension of Christ's theorem to the range $0<p<1$} \label{Sec04}

Proposition \ref{20260709prop01} suggests that, in order to extend Christ's result to the range $0<p<1$, one should also modify the target space $L^{p,\infty}(\R^2)$. The new target should be adapted to the cost of each thin parabolic set $E_1(R)$, namely $R^{2-2p}$; see \eqref{20260709eq01}. To do this, we first recall the concept of tendrils introduced by Christ \cite{Christ1988}, which plays an important role in his endpoint argument. 

\subsection{Parabolic geometry and the weak tendril spaces}

For any $Q=Q(z, \rho)$ being a parabolic box and $\kappa \ge 1$, define its parabolic enlargement by 
$$
        \kappa Q(z,\rho):=Q(z,\kappa\rho)=z+[-\kappa \rho, \kappa \rho] \times [-\kappa^2 \rho^2, \kappa^2 \rho^2],
$$
and in particular, denote
$$
Q^*=4Q:=Q(z, 4\rho).
$$
Therefore, $|Q^*|=64|Q|$.

Let us recall the following very important geometric object introduced in Christ's 1988 paper \cite{Christ1988}.

\begin{defn}
Let $Q=Q(z,\rho_Q)$ be a parabolic box.  For $R\ge \rho_Q$, define the \emph{tendrils} of $Q$ by
$$
T(Q,R):=Q^*+\{(t, t^2): |t|\le R\}.
$$   
\end{defn}
An example of a tendril is shown in Figure~\ref{Fig1}.

\begin{figure}[ht]
\centering
\begin{tikzpicture}[x=3.7cm,y=2.35cm,>=stealth,line cap=round,line join=round]

\draw[->] (-1.60,0) -- (1.60,0) node[right] {$x_1$};
\draw[->] (0,-0.30) -- (0,1.35) node[above] {$x_2$};

\draw (-1,0.018) -- (-1,-0.018) node[below=4pt] {$-1$};
\draw (1,0.018) -- (1,-0.018) node[below=4pt] {$1$};
\draw (0.018,1) -- (-0.018,1) node[left=4pt] {$1$};

\draw[blue!75!black, thick]
(-1.45,1.18)
-- (-0.55,1.18)
plot[domain=-0.55:0,samples=80] (\x,{(\x-0.45)*(\x-0.45)+0.18})
plot[domain=0:0.55,samples=80] (\x,{(\x+0.45)*(\x+0.45)+0.18})
-- (1.45,1.18)
-- (1.45,0.82)
plot[domain=1.45:0.45,samples=80] (\x,{(\x-0.45)*(\x-0.45)-0.18})
-- (-0.45,-0.18)
plot[domain=-0.45:-1.45,samples=80] (\x,{(\x+0.45)*(\x+0.45)-0.18})
-- (-1.45,1.18);

\draw[dashed, very thick]
plot[domain=-1.3:1.3,samples=120] (\x,{\x*\x});

\draw[red!75!black, thick] (-0.45,-0.18) rectangle (0.45,0.18);
\fill[orange!75!black] (-0.20,-0.045) rectangle (0.20,0.045);

\node[blue!75!black] at (-0.8,1.3) {$T(Q,1)$};

\node at (1.75, 1.35) {$\gamma(t)=(t,t^2),\ |t|\le 1$};

\node[red!75!black] at (0.63,0.22) {$Q^*$};
\draw[red!75!black,->] (0.56,0.20) -- (0.44,0.16);

\node[orange!75!black] at (0.30,-0.09) {$Q$};
\draw[orange!75!black,->] (0.25,-0.055) -- (0.16,-0.025);

\end{tikzpicture}
\caption{An example of tendril with $Q=Q(0, 1/10), Q^*=(0, 2/5)$, and $R=1$.}
\label{Fig1}
\end{figure}

Observe that the motivation for the above definition is clear: it captures
the localization feature of  $M_{\textnormal{par}}$. Moreover, it is only meaningful to consider the
regime in which $R$ is sufficiently larger than $\rho_Q$; otherwise, $T(Q,R)$
is contained in a fixed constant enlargement of $Q$. 

\begin{lem} \label{20260710lem01}
For every parabolic box $Q$ and every $R\ge \rho_Q$,
$$
|T(Q,R)|\simeq \rho_Q R^2.
$$
\end{lem}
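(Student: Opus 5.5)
We first reduce to $z=0$ by translation invariance of Lebesgue measure, so $Q^*=[-4\rho,4\rho]\times[-16\rho^2,16\rho^2]$ with $\rho=\rho_Q$. We then use the parabolic dilation $\delta_\rho$. Since $\delta_\rho(t,t^2)=(\rho t,\rho^2t^2)$ maps the parabola to itself with $|t|\le R/\rho$ corresponding to $|\rho t|\le R$, we have $T(Q,R)=\delta_\rho\big(T(Q(0,1),R/\rho)\big)$. Because $|\delta_\rho E|=\rho^3|E|$, it suffices to prove $|T(Q(0,1),S)|\simeq S^2$ for all $S\ge 1$; indeed then $|T(Q,R)|\simeq \rho^3 (R/\rho)^2=\rho R^2$.

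So fix $S\ge1$ and set $K=[-4,4]\times[-16,16]$, $T_S=K+\{(t,t^2):|t|\le S\}$.

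\emph{Upper bound.} For $x\in T_S$, write $x=k+(t,t^2)$ with $k\in K$. Then for fixed $x_1$, the parameter $t$ ranges over $[x_1-4,x_1+4]$. On this interval $t^2$ varies by at most $16(|x_1|+4)$, so the vertical fibre of $T_S$ over $x_1$ has length $\lesssim |x_1|+4+32\lesssim S$. The projection onto the $x_1$-axis lies in $[-S-4,S+4]$. Fubini then gives $|T_S|\lesssim S\cdot S=S^2$.

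\emph{Lower bound.} We produce a disjoint-ish family of pieces. For integers $j$ with $1\le j\le S/2$ (and at least one piece when $S$ is near $1$, where trivially $|T_S|\ge|K|\gtrsim 1\simeq S^2$), we consider the union over $t\in[j,j+1]$ of the vertical segments $\{t\}\times[t^2-16,t^2+16]$ shifted horizontally by $k_1\in[-4,4]$. More simply, we examine the vertical fibre over $x_1$ with $|x_1|\le S$. Taking $k_1\in[-4,4]$ and $t=x_1-k_1$ (admissible whenever $|t|\le S$, which holds for at least half of the $k_1$-range), the fibre contains $\bigcup_{t}[t^2-16,t^2+16]$ over an interval of $t$ of length $\ge 4$ near $x_1$. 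This union is an interval of length at least the oscillation of $t^2$ there, which is $\gtrsim |x_1|$ once $|x_1|\ge 8$ (and the length is at least $32$ in all cases). Integrating over $|x_1|\le S$ gives $|T_S|\gtrsim \int_{-S}^{S}(1+|x_1|)\,dx_1\simeq S^2$.

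The only delicate point is ensuring that the $t$-interval stays inside $[-S,S]$ near the endpoints $|x_1|\approx S$. We handle this by restricting to $|x_1|\le S$ and choosing $k_1$ with the appropriate sign, so that $t$ moves toward the origin. Everything else is routine Fubini and the scaling identity, so we expect no genuine obstacle beyond this bookkeeping.
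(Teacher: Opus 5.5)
Your argument is correct, but the lower bound takes a different route from the paper's. The upper bound is essentially the paper's: Fubini over vertical fibres, each contained in $\{t^2:t\in I_{x_1}\}+[-16\rho_Q^2,16\rho_Q^2]$ with $|I_{x_1}|\le 8\rho_Q$. The only difference is that you first normalize $\rho_Q=1$ via $T(Q(0,\rho),R)=\delta_\rho\bigl(T(Q(0,1),R/\rho)\bigr)$ and $|\delta_\rho E|=\rho^3|E|$. This is valid because $\delta_\rho$ is linear and maps the parabola to itself; the paper simply carries $\rho_Q$ through the computation.

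For the lower bound, the paper exhibits the explicit subset $\{(u+\tau,\tau^2): R/2\le\tau\le R,\ |u|\le\eta\rho_Q\}\subset T(Q,R)$. This set is injectively parametrized with Jacobian $2\tau$, so $|T(Q,R)|\ge\frac32\eta\rho_QR^2$ follows in one line with no case analysis. You instead bound each vertical fibre from below by $32+\mathrm{osc}_I(t^2)\gtrsim 1+|x_1|$ and integrate. This needs a separate treatment of bounded $S$ and some endpoint bookkeeping. In exchange, it shows that the fibre of $T(Q,R)$ over $x_1$ has length $\simeq\rho_Q^2+\rho_Q|x_1-z_1|$ for $|x_1-z_1|\le R$. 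That is a two-sided fibrewise description matching your upper bound, which the paper's argument does not give.

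When you write it up, drop the abandoned ``integers $j$'' construction and make the threshold explicit:
\begin{itemize}
\item For $1\le S\le 8$, use $|T_S|\ge|K|=256\ge 4S^2$.
\item For $S\ge 8$ and $0\le x_1\le S$ (and symmetrically for $x_1<0$), take $t\in[x_1-4,x_1]\subset[-S,S]$.
\end{itemize}
Your remark that the admissible $k_1$ fill ``at least half'' of $[-4,4]$ is false for small $S$ (e.g.\ $S=1$, $x_1=0$). It is harmless once small $S$ is handled separately.
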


\begin{proof}
We first prove the upper bound. By translation, it suffices to consider the case $Q=Q(0,\rho_Q)$, and therefore
$$
Q^*=Q(0, 4\rho_Q)=\left[-4\rho_Q, 4\rho_Q \right] \times \left[-16\rho_Q^2, 16\rho_Q^2 \right]
$$
If $x=(x_1,x_2)\in T(Q,R)$, then for some $|t|\le R$,
$$
|x_1-t|\le 4\rho_Q,
\qquad
|x_2-t^2|\le 16\rho_Q^2.
$$
In particular, the possible values of $x_1$ lie in an interval of length
$\le 5R$, since $R\ge \rho_Q$.

For each such $x_1$, set
$$
I_{x_1}:=[x_1-4\rho_Q,x_1+4\rho_Q]\cap[-R,R].
$$
Since $t^2-16\rho_Q^2 \le x_2 \le t^2+16\rho_Q^2$, the vertical section of $T(Q,R)$ at $x_1$ is contained in
$$
\{t^2:t\in I_{x_1}\}+[-16\rho_Q^2,16\rho_Q^2].
$$
Since $|I_{x_1}|\le 8\rho_Q$ and $I_{x_1}\subset[-R,R]$, the length of this
vertical section is bounded by
$$
|I_{x_1}|\sup_{|t|\le R}|2t|+32\rho_Q^2
\lesssim \rho_Q R+\rho_Q^2
\lesssim \rho_Q R,
$$
again using $R\ge \rho_Q$. Therefore, by Fubini's theorem,
$$
|T(Q,R)| \le \int_{|x_1| \le 5R} \left(\int_{I_{x_1}} \one_{T(Q, R)}(x_1, x_2) dx_2 \right) dx_1 \lesssim R\cdot \rho_Q R=\rho_Q R^2.
$$

\medskip

For the lower bound, by translation we may again assume that
$Q=Q(0,\rho_Q)$. Since $Q^*=Q(0,4\rho_Q)$, we have
$$
\{(u,0): |u|\le \eta \rho_Q\}\subset Q^*
$$
for some sufficiently small absolute constant $\eta>0$. Hence
$$
S:=\{(u,0)+(\tau,\tau^2): R/2\le \tau\le R,\ |u|\le \eta\rho_Q\}
        \subset T(Q,R).
$$
Using the same change-of-variables argument as in Lemma \ref{20260625lem01}, we obtain
$$
|T(Q,R)|\ge |S|
=
\int_{R/2}^{R}\int_{-\eta\rho_Q}^{\eta\rho_Q} 2\tau\,du\,d\tau
\gtrsim \rho_QR^2.
$$
This gives the desired lower bound.
\end{proof}

\medskip

The argument in Proposition \ref{20260709prop01} suggests that the definition of weak $L^{p,\infty}(\R^2)$ must be modified to reflect the geometry of tendrils.

\begin{defn}[Tendril cover]
Let $E\subset\R^2$ be any measurable set.  A \emph{tendril cover} of $E$ is a countable family
$$
        \{ \left(\theta_i, T(Q_i,R_i) \right)\}_{i \ge 1}, \quad \textrm{with}  \qquad \theta_i \ge 0, 
        \quad R_i \ge \rho_{Q_i},
$$
such that
$$
\one_E(x)\leq
 \sum_{i\geq1}\theta_i\one_{T(Q_i,R_i)}(x)
 \quad\text{for almost every }x\in\R^2.
$$
\end{defn}

\begin{defn}[Weak tendril spaces] \label{20260714defn01}
Let $0<p\le 1$, and let $E\subseteq \R^2$ be a measurable set. Define the
\emph{$p$-tendril content} of $E$ by
\begin{equation} \label{20260710eq01}
        C_p(E)
        :=
        \inf
        \sum_{i\ge 1} \theta_i \rho_{Q_i}R_i^{2p},
\end{equation}
where the infimum is taken over all countable tendril coverings of $E$.
The \emph{weak tendril space} $\calT^{p,\infty}(\R^2)$ is defined to be the
collection of all measurable functions $f$ on $\R^2$ such that
$$
        \|f\|_{\calT^{p,\infty}(\R^2)}^p
        :=
        \sup_{\alpha>0}\alpha^p C_p(\{|f|>\alpha\})<\infty.
$$
\end{defn}

By Lemma \ref{20260710lem01}, we have
$$
C_p(E)
\simeq
\inf \sum_{i\ge 1} \theta_i |T(Q_i,R_i)|\,R_i^{2p-2}.
$$
Here the factor $R_i^{2p-2}$ is precisely the correction term dictated by the
cost appearing in \eqref{20260709eq01}.

\medskip

We first show that the $p$-tendril content is non-degenerate. 

\begin{lem} \label{v27:lem:local}
Let $0<p \le 1$, and let $K\subset\R^2$ be a fixed bounded rectangle. Then
\begin{equation}\label{v27:eq:local}
        |T(Q,R)\cap K|
        \lesssim_{p,K}
        \rho_QR^{2p}
\end{equation}
for every parabolic box $Q$ and every $R\geq\rho_Q$.
\end{lem}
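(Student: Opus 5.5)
The plan is to split according to whether $R\le 1$ or $R>1$. Since $0<p\le 1$, the quantity $R^{2p}$ dominates $R^2$ when $R\le1$ and dominates $1$ when $R>1$. So it suffices to prove the two bounds
$$
|T(Q,R)\cap K|\le |T(Q,R)|\lesssim \rho_Q R^2 \quad (R\le 1),
\qquad
|T(Q,R)\cap K|\lesssim_K \rho_Q \quad (\text{all } R\ge\rho_Q).
$$
The first bound is immediate from Lemma~\ref{20260710lem01}, combined with $R^2\le R^{2p}$ for $R\le1$. For $R>1$ the second bound gives $\rho_Q\le\rho_QR^{2p}$, which finishes the proof.

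For the second bound I will slice horizontally rather than vertically. The vertical slicing used in Lemma~\ref{20260710lem01} produces vertical sections of length $\simeq\rho_Q|t|$, which is bad when the parameter $t$ is large. This can happen because the centre $z$ of $Q$ is arbitrary: the tendril may pass through $K$ only far out along the parabola. Write $Q=Q(z,\rho_Q)$. A point $x\in T(Q,R)$ has the form $x=y+(t,t^2)$ with $y\in Q^*$ and $|t|\le R$, so that
$$
|x_1-z_1-t|\le 4\rho_Q,\qquad |x_2-z_2-t^2|\le 16\rho_Q^2 .
$$
Fix $x_2$. The admissible $t$ then satisfy $t^2\in J:=[x_2-z_2-16\rho_Q^2,\;x_2-z_2+16\rho_Q^2]$, an interval of length $32\rho_Q^2$. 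The preimage of $J$ under $t\mapsto t^2$ consists of at most two intervals. Since $t\mapsto |t|=\sqrt{t^2}$ is $\tfrac12$-Hölder, i.e. $|\sqrt a-\sqrt b|\le\sqrt{|a-b|}$, each of these intervals has length at most $\sqrt{32}\,\rho_Q$. The horizontal section of $T(Q,R)$ at height $x_2$ is contained in this $t$-set shifted by $z_1$ and fattened by $[-4\rho_Q,4\rho_Q]$. Hence its length is at most $2(\sqrt{32}+8)\rho_Q\lesssim\rho_Q$, uniformly in $z$, $R$ and $x_2$.

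Integrating this section bound over $x_2$ in the vertical projection of $K$, which has length $\lesssim_K 1$, Fubini's theorem gives $|T(Q,R)\cap K|\lesssim_K\rho_Q$, independently of $z$ and $R$. This is the only step that needs care. The key observation is that horizontal sections of a parabolic tendril have width $O(\rho_Q)$ regardless of how far out on the curve one is, because the square root is $\tfrac12$-Hölder and the box $Q^*$ has vertical height $\simeq\rho_Q^2$. The two cases together yield \eqref{v27:eq:local}, with an implicit constant depending only on the height of $K$ (and trivially on $p$).
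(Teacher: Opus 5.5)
Your proof is correct and follows the paper's route: you split at $R=1$, use Lemma~\ref{20260710lem01} for $R\le 1$, and for $R>1$ use horizontal slicing with $O(\rho_Q)$-length sections, then Fubini over the vertical projection of $K$. The differences are cosmetic: you absorb the paper's separate case $\rho_Q\ge 1$ into the slicing bound (which indeed never uses $\rho_Q<1$), and you replace its three-case computation of $|A_{x_2}|$ with the $\tfrac12$-H\"older continuity of $t\mapsto\sqrt{t}$.
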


\begin{proof}
If $R\leq1$, then Lemma~4.2 gives
$$
|T(Q,R)\cap K| \leq |T(Q,R)|
\lesssim \rho_QR^2 \leq \rho_QR^{2p}.
$$
If $\rho_Q\geq1$, then $R\geq\rho_Q\geq1$, and hence
$$
        |T(Q,R)\cap K|
        \leq
        |K|
        \lesssim_K
        \rho_QR^{2p}.
$$
Finally, suppose that $R>1$ and $\rho_Q<1$. Write $Q=Q(z,\rho_Q)$, where
$z=(z_1,z_2)$. For each $x_2 \in\R$, define
$$
        A_{x_2}
        :=
        \left\{
        t\in[-R,R]:
        |x_2-z_2-t^2|\leq16\rho_Q^2
        \right\}.
$$
By the definition, for any $(x_1, x_2)\in T(Q,R)$, there exists $t\in A_{x_2}$ such that
$$
        |x_1-z_1-t|
        \leq
        4\rho_Q.
$$
Consequently, the horizontal section of $T(Q,R)$ at height $x_2$ is
contained in
$$
        z_1+A_{x_2}+[-4\rho_Q,4\rho_Q].
$$

We claim that
\begin{equation} \label{20260712claim01}
        |A_{x_2}|
        \lesssim
        \rho_Q
\end{equation} 
uniformly in $x_2$. We consider three cases. 

\medskip 

\noindent $\bullet$ If $x_2-z_2<-16\rho_Q^2$, then it is clear that $A_{x_2}$ is empty. 

\medskip 

\noindent $\bullet$ If $-16\rho_Q^2
\leq x_2-z_2 \leq 16\rho_Q^2$, 
then every $t\in A_{x_2}$ satisfies
$$
|t|\leq\sqrt{x_2-z_2+16\rho_Q^2} \leq 4\sqrt{2}\rho_Q,
$$
and therefore $|A_{x_2}|\lesssim\rho_Q$.

\medskip 

\noindent $\bullet$ If $x_2-z_2>16\rho_Q^2$, since the set determined by
$$
|t^2-(x_2-z_2)| \leq 16\rho_Q^2
$$
is the union of the two intervals
$$
        \left[
        \sqrt{x_2-z_2-16\rho_Q^2}, 
        \; 
        \sqrt{x_2-z_2+16\rho_Q^2}
        \right]
\quad \textrm{and} \quad 
        \left[
        -\sqrt{x_2-z_2+16\rho_Q^2}, \; 
        -\sqrt{x_2-z_2-16\rho_Q^2}
        \right].
$$
The sum of their lengths equals
$$
\begin{aligned}
        2\left(
        \sqrt{x_2-z_2+16\rho_Q^2}
        -
        \sqrt{x_2-z_2-16\rho_Q^2}
        \right)
        &=
        \frac{64\rho_Q^2}
        {\sqrt{x_2-z_2+16\rho_Q^2}
        +\sqrt{x_2-z_2-16\rho_Q^2}}  \\
        &\lesssim
        \rho_Q,
\end{aligned}
$$
since $x_2-z_2>16\rho_Q^2$. Intersecting these intervals with $[-R,R]$
cannot increase their total length. Therefore, claim \eqref{20260712claim01} holds. 

\vspace{0.1cm}

As a consequence, we have 
$$
        \left|
        z_1+A_{x_2}+[-4\rho_Q,4\rho_Q]
        \right|
        \lesssim
        \rho_Q.
$$
Therefore, by Fubini, we have 
$$
\begin{aligned}
        |T(Q,R)\cap K|
        &=
        \int_{\pi_2(K)}
        \left|
        \left\{
        x_1:(x_1, x_2)\in T(Q,R)\cap K
        \right\}
        \right|\,dx_2  \\
        &\le \int_{\pi_2(K)} \left|
        z_1+A_{x_2}+[-4\rho_Q,4\rho_Q]
        \right| dx_2 \\ 
        &\lesssim
        |\pi_2(K)|\cdot \rho_Q
        \lesssim_K
        \rho_Q \le_K \rho_Q R^{2p} 
\end{aligned}
$$
where  $\pi_2(K)$ denotes the vertical
projection of $K$, and in the last estimate above, we have used the assumption $R>1$. 

\vspace{0.1cm}

The proof  \eqref{v27:eq:local} is complete. 
\end{proof}

\begin{prop} \label{20260714prop10} 
Let $0<p<1$. If $E\subset\R^2$ has positive Lebesgue measure, then
$$
        C_p(E)>0.
$$
\end{prop}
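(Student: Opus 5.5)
The plan is to reduce Proposition~\ref{20260714prop10} to the local estimate of Lemma~\ref{v27:lem:local}. Since $|E|>0$, continuity of measure lets us choose a bounded rectangle $K\subset\R^2$ (for instance, $K=[-n,n]^2$ with $n$ large) such that $|E\cap K|>0$. Monotonicity of $C_p$ is immediate: any tendril cover of $E$ is also a tendril cover of $E\cap K$. It therefore suffices to show that
$$
C_p(E\cap K)\gtrsim_{p,K}|E\cap K|>0 .
$$

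Fix an arbitrary tendril cover $\{(\theta_i,T(Q_i,R_i))\}_{i\ge1}$ of $E$. Multiplying the pointwise inequality $\one_E\le\sum_i\theta_i\one_{T(Q_i,R_i)}$ (valid almost everywhere) by $\one_K$ and integrating, with the exchange of sum and integral justified by monotone convergence since all terms are nonnegative, we obtain
$$
|E\cap K|\le\sum_{i\ge1}\theta_i\,|T(Q_i,R_i)\cap K|.
$$
Applying Lemma~\ref{v27:lem:local} to each term, which is allowed because $R_i\ge\rho_{Q_i}$, gives $|T(Q_i,R_i)\cap K|\le C_{p,K}\,\rho_{Q_i}R_i^{2p}$ with a constant independent of $i$. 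Hence
$$
|E\cap K|\le C_{p,K}\sum_{i\ge1}\theta_i\rho_{Q_i}R_i^{2p}.
$$
Taking the infimum over all tendril covers yields $C_p(E)\ge C_{p,K}^{-1}|E\cap K|>0$.

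The only substantive input is the uniformity of the constant in Lemma~\ref{v27:lem:local} over all $Q$ and all $R\ge\rho_Q$. Without localizing to $K$, a single long tendril has measure $\simeq\rho_QR^2$ by Lemma~\ref{20260710lem01}, which is much larger than its cost $\rho_QR^{2p}$ when $p<1$ and $R\gg1$. A global comparison with Lebesgue measure therefore fails, and the restriction to a bounded $K$ is exactly the step that makes the argument work. Beyond this localization there is no real obstacle. The hypothesis $0<p<1$ enters only through Lemma~\ref{v27:lem:local}, and in fact that lemma also holds for $p=1$.
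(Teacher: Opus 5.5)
Your proposal is correct and matches the paper's proof: choose a bounded rectangle $K$ with $|E\cap K|>0$, integrate the covering inequality over $K$, apply Lemma~\ref{v27:lem:local} to each tendril, and take the infimum over all tendril covers. The detour through $C_p(E\cap K)$ is harmless but unnecessary, since you end up working directly with covers of $E$, just as the paper does.
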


\begin{proof}
Choose a bounded rectangle $K$ such that $|E\cap K|>0$. If
$\{(\theta_i,T(Q_i,R_i))\}_i$ is a tendril cover of $E$,
then by
Lemma~\ref{v27:lem:local}, we obtain
\begin{align*} 
|E\cap K|
&=\int_K \one_E(x) dx \nonumber \\
&\le \int_K \left(\sum_{i \ge 1} \theta_i \one_{T(Q_i, R_i)}(x) \right)dx \nonumber  \\
&\leq \sum_i\theta_i|T(Q_i,R_i)\cap K| \nonumber  \\
        &\lesssim_{p,K}
        \sum_i\theta_i\rho_{Q_i}R_i^{2p}.
\end{align*}
Taking the infimum over all tendril covers of $E$ gives
\begin{equation} \label{20260713eq01}
        C_p(E)
        \gtrsim_{p,K}
        |E\cap K|
        >0.
\end{equation} 
\end{proof}

Next, we observe that the weak tendril spaces $\mathcal T^{p,\infty}(\mathbb R^2)$ provide a natural extension of $L^{1,\infty}(\mathbb R^2)$, adapted to the underlying parabolic geometry.

\begin{prop}
\label{20260710prop01}
One has
$$
        \calT^{1,\infty}(\R^2)=L^{1,\infty}(\R^2)
$$
with equivalent quasi-norms. 
\end{prop}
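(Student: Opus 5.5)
The plan is to show that at $p=1$ the $1$-tendril content is comparable to Lebesgue measure: $C_1(E)\simeq |E|$ for every measurable set $E\subseteq\R^2$. Both quasi-norms are of the form $\sup_\alpha \alpha\,(\cdot)(\{|f|>\alpha\})$, with $C_1$ in one and $|\cdot|$ in the other. Hence this two-sided comparison immediately gives $\|f\|_{\calT^{1,\infty}}\simeq\|f\|_{L^{1,\infty}}$, and so the two spaces coincide.

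\emph{Lower bound $|E|\lesssim C_1(E)$.} Take any tendril cover $\{(\theta_i,T(Q_i,R_i))\}$ of $E$. Integrate the pointwise inequality $\one_E\le\sum_i\theta_i\one_{T(Q_i,R_i)}$ over $\R^2$ and use the upper bound in Lemma~\ref{20260710lem01}:
$$
|E|\le\sum_i\theta_i|T(Q_i,R_i)|\lesssim\sum_i\theta_i\rho_{Q_i}R_i^{2}.
$$
At $p=1$ the right-hand side is exactly the sum in \eqref{20260710eq01}. Taking the infimum over covers gives $|E|\lesssim C_1(E)$. Note that, unlike Proposition~\ref{20260714prop10}, no localization to a bounded rectangle $K$ is needed here, because the exponent $2p=2$ matches the true area $\rho_QR^2$ of the tendril.

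\emph{Upper bound $C_1(E)\lesssim|E|$.} We may assume $|E|<\infty$. The idea is to use degenerate tendrils with $R=\rho_Q$. By Lemma~\ref{20260710lem01}, $|T(Q,\rho_Q)|\simeq\rho_Q^3\simeq|Q|$, and $T(Q,\rho_Q)\supseteq Q^*\supseteq Q$. The steps are as follows.
\begin{enumerate}
\item Fix $\varepsilon>0$ and choose an open set $U\supseteq E$ with $|U|\le|E|+\varepsilon$.
\item Decompose $U$ (up to a null set) into countably many parabolic boxes $Q_i$ with disjoint interiors. One way is to use parabolic dyadic rectangles of the form $[k2^{-j},(k+1)2^{-j}]\times[m4^{-j},(m+1)4^{-j}]$, which are nested across generations, and take the maximal ones contained in $U$. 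Each such rectangle is a translate of $Q(0,2^{-j-1})$, so it is a parabolic box, and the maximal ones are disjoint and exhaust $U$ since $U$ is open.
\item Take the cover with $\theta_i=1$ and $R_i=\rho_{Q_i}$. It covers $E$ almost everywhere, and its cost is
$$
\sum_i\rho_{Q_i}R_i^2=\sum_i\rho_{Q_i}^3\simeq\sum_i|Q_i|=|U|\le|E|+\varepsilon.
$$
\end{enumerate}
Letting $\varepsilon\to0$ gives $C_1(E)\lesssim|E|$.

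The only point requiring some care is step 2, verifying that parabolic dyadic rectangles are genuinely parabolic boxes and form a nested grid so that the maximal-cube decomposition works. Here the scaling $2^{-j}$ horizontally and $4^{-j}$ vertically makes this routine. Everything else is a direct application of Lemma~\ref{20260710lem01}.
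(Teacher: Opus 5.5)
Your proof is correct and is essentially the paper's argument. The lower bound integrates the cover and applies the upper bound of Lemma~\ref{20260710lem01}. The upper bound covers an open $U\supseteq E$ by maximal parabolic dyadic rectangles and uses the degenerate tendrils $T(Q,\rho_Q)\supseteq Q^*\supseteq Q$, each of cost $\rho_Q^3\simeq|Q|$.

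There is one small slip in step 2. The rectangle $[k2^{-j},(k+1)2^{-j}]\times[m4^{-j},(m+1)4^{-j}]$ is not a translate of $Q(0,2^{-j-1})$, because $Q(0,2^{-j-1})$ has height $2\cdot 4^{-j-1}=4^{-j}/2$. There are two easy fixes:
\begin{enumerate}
\item Enclose each maximal rectangle $P$, with center $z_P$ and width $\ell(P)$, in $Q(z_P,\ell(P))$, as the paper does. The cost is then $\ell(P)^3=|P|$.
\item Use the grid with vertical side $4^{-j}/2$ instead. It is still nested and consists of genuine parabolic boxes.
\end{enumerate}
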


\begin{proof}
It suffices to show that 
\begin{equation} \label{20260710eq10}
        C_1(E)\simeq |E|.
\end{equation} 
We begin with the lower bound. Let
$\{(\theta_i, \; T(Q_i,R_i))\}_{i\ge 1}$ be any tendril cover of $E$. Hence, by Lemma \ref{20260710lem01},
$$
        |E|
        \le
        \sum_{i\ge 1} \theta_i |T(Q_i,R_i)|
        \lesssim
        \sum_{i\ge 1} \theta_i \rho_{Q_i}R_i^2.
$$
Taking the infimum over all tendril covers of $E$ yileds the desired lower bound. 

\vspace{0.1cm}

Next, we prove the upper bound in \eqref{20260710eq10}. We may assume that $|E|<\infty$, otherwise there is nothing to prove. Fix $\varepsilon>0$. Take an open set $O\subset \R^2$ such that
$$
        E\subset O,
        \qquad
        |O|\le |E|+\varepsilon.
$$

We now cover $O$ by dyadic parabolic boxes. More precisely, for
$k\in\Z$, let $\mathcal P_k$ denote the collection of all dyadic parabolic boxes
$$
P:=[m2^{-k},(m+1)2^{-k})\times [n2^{-2k},(n+1)2^{-2k}), \qquad m,n\in\Z.
$$
Each such box has horizontal length $2^{-k}$, vertical length $2^{-2k}$,
and area $|P|=2^{-3k}$. Let $\mathcal P(O)$ be the collection of maximal dyadic parabolic rectangles
contained in $O$. Since $O$ is open, these maximal rectangles cover $O$ up to a
null set. Moreover, they are pairwise disjoint. Hence
$$
        \sum_{P \in\mathcal P(O)} |P| \le |O|.
$$
For each $P\in\mathcal P(O)$, write its horizontal length as $\ell(P)$
and let $z_P$ be its center. Define $Q_P:=Q(z_P,\ell(P))$. Note that 
$$
P\subset Q_P\subset Q_P^*\subset T(Q_P,\ell(P)).
$$
Thus the collection
$$
        \{1, \; T(Q_P,\ell(P))\}_{P\in\mathcal P(O)}
$$
is a tendril cover of $O$, and hence also of $E$. Consequently,
\begin{align*}
C_1(E)
&\le \sum_{P\in\mathcal P(O)} \rho_{Q_P}\ell(P)^2 =\sum_{P\in\mathcal P(O)} \ell(P)^3 \\
&=\sum_{P\in\mathcal P(O)} |P|  \le |O| \le |E|+\varepsilon.
\end{align*}
Letting $\varepsilon\to 0$, we get the desired upper bound $C_1(E)\lesssim |E|$.
\end{proof}

To this end, we show that the $\calT^{p, \infty}(\R^2)$ does not remove the small scale obstruction identified in Theorem \ref{mainthm01}. 

\begin{prop}\label{v27:prop:failure}
For every $0<p<1$, the parabolic maximal operator $M_{\textnormal{par}}$ is
not bounded from $H_{\textnormal{par}}^p(\R^2)$ to
$\calT^{p,\infty}(\R^2)$.
\end{prop}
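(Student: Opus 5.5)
The plan is to reuse the atom ${\mathfrak a}_{Q_\rho}$ and the set $E_\rho$ from the proof of Theorem~\ref{mainthm01}, and to replace the Lebesgue measure lower bound \eqref{20260625goal01} by a lower bound on the $p$-tendril content $C_p(E_\rho)$. The input for this is the local non-degeneracy estimate \eqref{20260713eq01} from Proposition~\ref{20260714prop10}. First, ${\mathfrak a}_{Q_\rho}$ from \eqref{20260626atomdef} is a parabolic $(p,\infty)$-atom by Lemma~\ref{20260626lem03}, so $\|{\mathfrak a}_{Q_\rho}\|_{H^p_{\textnormal{par}}(\R^2)}\le 1$ uniformly in $\rho$. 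Second, since $A=A_1$ is one of the averages in \eqref{20260625avgop}, we have $M_{\textnormal{par}}{\mathfrak a}_{Q_\rho}\ge |A{\mathfrak a}_{Q_\rho}|$ pointwise. By \eqref{20260626lowerbound}, there is a constant $c>0$, independent of $\rho$, such that
$$
E_\rho\subset\left\{x: M_{\textnormal{par}}{\mathfrak a}_{Q_\rho}(x)>c\rho^{2-\frac3p}\right\}.
$$
The content $C_p$ is monotone under inclusion, since any tendril cover of the larger set covers the smaller one. Taking $\alpha=c\rho^{2-3/p}$ in Definition~\ref{20260714defn01} therefore gives
$$
\left\|M_{\textnormal{par}}{\mathfrak a}_{Q_\rho}\right\|_{\calT^{p,\infty}(\R^2)}^p\ge c^p\rho^{2p-3}\,C_p(E_\rho).
$$

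It then remains to show $C_p(E_\rho)\gtrsim \rho$, with a constant independent of $\rho$. For $\rho<1$ and $\eta<1$, every point $(u+\tau,\tau^2)$ of $E_\rho$ satisfies $|u|<1$ and $\tau\in(5/4,7/4)$. Hence $E_\rho$ lies in the fixed rectangle $K:=[-1,3]\times[0,4]$, which does not depend on $\rho$. Applying the argument of Proposition~\ref{20260714prop10} with this fixed $K$ (that is, Lemma~\ref{v27:lem:local} followed by integration of the covering inequality over $K$), we get
$$
C_p(E_\rho)\gtrsim_{p,K}|E_\rho\cap K|=|E_\rho|\gtrsim\rho,
$$
where the last step is Lemma~\ref{20260625lem01}. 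Combining the two displays yields
$$
\|M_{\textnormal{par}}{\mathfrak a}_{Q_\rho}\|_{\calT^{p,\infty}(\R^2)}^p\gtrsim\rho^{2p-2}\longrightarrow\infty\qquad\text{as }\rho\to0,
$$
because $p<1$. Since the atoms have uniformly bounded $H^p_{\textnormal{par}}$ quasi-norm, this contradicts boundedness.

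The only point needing care is the uniformity of the implicit constant in the bound $C_p(E_\rho)\gtrsim|E_\rho|$. That constant comes from Lemma~\ref{v27:lem:local}, which depends only on $p$ and $K$, so the essential step is to fix $K$ before letting $\rho\to0$. With $K$ fixed as above, all constants are independent of $\rho$ and the argument closes. Conceptually, this shows that tendrils cannot cover the thin set $E_\rho$ more cheaply than Lebesgue measure does at bounded scales: the modification of the target space only affects large scales, and the small-scale obstruction from Theorem~\ref{mainthm01} persists.
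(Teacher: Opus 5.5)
Your proposal is correct and is essentially the paper's own proof. You reuse $\mathfrak a_{Q_\rho}$ and $E_\rho$ from Section~\ref{Sec02}, bound $C_p(E_\rho)\gtrsim_p |E_\rho\cap K|\gtrsim\rho$ via \eqref{20260713eq01} with a fixed rectangle $K$ containing every $E_\rho$ (the paper takes $K=[0,10]^2$), and conclude that $\|M_{\textnormal{par}}\mathfrak a_{Q_\rho}\|_{\calT^{p,\infty}(\R^2)}^p\gtrsim\rho^{2p-2}\to\infty$. Your explicit remark that $K$ must be chosen independently of $\rho$ makes precise a point the paper leaves implicit.
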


\begin{proof}
Let ${\mathfrak a}_{Q_\rho}$ and $E_\rho$ be the parabolic $(p, \infty)$-atom and the exceptional set constructed in Section \ref{Sec02}, which satisfy
$$
\|{\mathfrak a}_{Q_\rho}\|_{H_{\textnormal{par}}^p(\R^2)}
\lesssim 1, \qquad |E_\rho| \gtrsim \rho,
$$
and
$$ 
M_{\textnormal{par}}{\mathfrak a}_{Q_\rho}(x)
\gtrsim \rho^{2-\frac3p}, \qquad x\in E_\rho.
$$
Moreover, since $E_\rho$ is contained in $[0, 10]^2$, by \eqref{20260713eq01}, we have 
$$
        C_p(E_\rho)
        \gtrsim_p
        |E_\rho \cap [0, 10]^2|
        \gtrsim
        \rho.
$$
Consequently,
$$
\begin{aligned}
        \|M_{\textnormal{par}}{\mathfrak a}_{Q_\rho}
        \|_{\calT^{p,\infty}(\R^2)}^p
        &\gtrsim
        \left(\rho^{2-\frac3p}\right)^p
        C_p(E_\rho)  \\
        &\gtrsim
        \rho^{2p-3}\rho
        =
        \rho^{2p-2},
\end{aligned}
$$
which converges to $\infty$ as $\rho \to 0$.
\end{proof}

\subsection{An extension of Christ's theorem to the range $0<p<1$}

We now prove the positive result. Recall that
$$
        w_p(Q)
        =
        \bigl[\min\{1,\rho_Q\}\bigr]^{2p-2}.
$$
For a parabolic box $Q=Q(z,\rho_Q)$ and $x\in\R^2$, define its
\emph{tendril radius} by
$$
        R_Q(x)
        :=
        \inf\{R\geq\rho_Q:x\in T(Q,R)\},
$$
with the convention that $R_Q(x)=\infty$ when the set on the right-hand
side is empty.

\begin{lem}\label{20260713lem01}
Let $0<p\leq1$, and let $a_Q$ be a parabolic $(p,\infty)$-atom supported
in $Q=Q(z,\rho_Q)$. Then
$$
M_{\textnormal{par}}a_Q(x) \lesssim \rho_Q^{2-\frac3p}R_Q(x)^{-2}, \qquad x\in\R^2.
$$
\end{lem}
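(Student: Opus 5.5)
The plan is to fix $x\in\R^2$ and $r>0$, bound $|A_r a_Q(x)|$ pointwise by a quantity of the form $\rho_Q^{2-\frac3p}r^{-2}$ (or $\rho_Q^{-\frac3p}$ when $r\lesssim\rho_Q$), and relate $r$ to $R_Q(x)$ through the support of $\varphi$. Throughout write $\rho=\rho_Q$ and $Q=Q(z,\rho)$. Since $\supp\varphi\subset(1/2,5/2)$ and $\|a_Q\|_\infty\le |Q|^{-1/p}\simeq\rho^{-3/p}$, we have
$$
|A_ra_Q(x)|\le \rho^{-\frac3p}\,\frac1r\,\bigl|\{t\in(r/2,5r/2):\ x-(t,t^2)\in Q\}\bigr|.
$$

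\emph{Step 1: support and tendril radius.} If the $t$-set above is empty, then $A_ra_Q(x)=0$. Otherwise there is some $t$ with $0<t<5r/2$ and $x\in Q+(t,t^2)\subset Q^*+(t,t^2)$. Hence $x\in T(Q,R)$ with $R=\max\{\rho,5r/2\}$, and so
$$
\rho\le R_Q(x)\le \max\{\rho,5r/2\}.
$$
In particular, if $R_Q(x)=\infty$, then $A_ra_Q(x)=0$ for every $r$ and there is nothing to prove. So it suffices to consider only those $r$ for which the $t$-set is nonempty.

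\emph{Step 2: the sublevel estimate.} This is the only step with real content, though it is routine. Put $c=x_2-z_2$. The second-coordinate condition $|c-t^2|\le\rho^2$, restricted to $t\ge r/2$, defines a single interval, because $t\mapsto t^2$ is increasing there with derivative $2t\ge r$. Its length is therefore at most $2\rho^2/r$. Separately, the first-coordinate condition $|x_1-z_1-t|\le\rho$ confines $t$ to an interval of length $2\rho$. Together these give
$$
\bigl|\{t\in(r/2,5r/2):\ x-(t,t^2)\in Q\}\bigr|\lesssim\min\{\rho,\rho^2/r\}.
$$

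\emph{Step 3: two cases.} If $r\le\rho$, use the trivial bound $|A_ra_Q(x)|\le\|a_Q\|_\infty\lesssim\rho^{-3/p}$. By Step 1, $R_Q(x)\le 5\rho/2$, so $\rho^{-3/p}\lesssim \rho^{2-\frac3p}R_Q(x)^{-2}$. If $r>\rho$, Steps 1 and 2 give
$$
|A_ra_Q(x)|\lesssim \rho^{-\frac3p}\,r^{-1}\,\rho^2r^{-1}=\rho^{2-\frac3p}r^{-2},
$$
and Step 1 gives $R_Q(x)\le 5r/2$, so $r^{-2}\lesssim R_Q(x)^{-2}$. In both cases the bound is uniform in $r$. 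Taking the supremum over $r>0$ yields the lemma.

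The only point needing care is Step 2: one must exploit the lower bound $t\gtrsim r$ to obtain the curvature gain $\rho^2/r$, rather than the cruder bound $\rho$. This gain is exactly where the factor $R_Q(x)^{-2}$, in place of $R_Q(x)^{-1}$, comes from.
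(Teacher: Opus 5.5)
Your proof is correct and follows the same route as the paper: bound $|A_ra_Q(x)|$ by $\rho^{-3/p}|I_{x,r}|/r$, use the curvature gain $|I_{x,r}|\lesssim\rho^2/r$ coming from $t\gtrsim r$, and compare $r$ with $R_Q(x)$. The only difference is in how $r$ is compared with $R_Q(x)$. You split on $r\le\rho$ versus $r>\rho$ and use only the one-sided bound $R_Q(x)\le\max\{\rho,5r/2\}$. The paper instead splits on $R_Q(x)\le100\rho_Q$ and proves the two-sided comparison $r\simeq t_0\simeq R_Q(x)$, of which only the direction $R_Q(x)\lesssim r$ is actually needed.
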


\begin{proof}
Fix $r>0$ and $x\in\R^2$, and set
$$
I_{x,r}:=\left\{ t\in\supp\;\varphi\left(\frac{\cdot}{r} \right) \subseteq \left(\frac{r}{2}, \frac{5r}{2} \right): \;  x-(t,t^2)\in Q \right\}.
$$
By the size condition of a parabolic $(p, \infty)$-atom, 
\begin{equation}\label{20260713eq10}
|A_ra_Q(x)|
= \left| \int_{\R} a_Q \left(x_1-t, x_2-t^2 \right) \varphi \left(\frac{t}{r} \right) \frac{dt}{r} \right| \lesssim \rho_Q^{-\frac3p} \cdot \frac{|I_{x,r}|}{r}.
\end{equation}
Here, we may assume $I_{x,r} \neq \varnothing$, otherwise there is nothing to prove. 

\vspace{0.1cm}

Suppose first that $R_Q(x)\le 100\rho_Q$. Since
$R_Q(x)\geq\rho_Q$, it follows that $R_Q(x)\simeq\rho_Q$. Then \eqref{20260713eq10} together with the trivial
estimate $|I_{x,r}|\lesssim r$ then gives
$$
        |A_ra_Q(x)|
        \lesssim
        \rho_Q^{-\frac3p}
        \simeq
        \rho_Q^{2-\frac3p}R_Q(x)^{-2}.
$$

Next, we consider the case when $R_Q(x)>100\rho_Q$. Choose
$t_0\in I_{x,r}$, and hence $t_0 \simeq r$ and $x \in Q+(t_0,t_0^2)$. Observe that in this case one has $t_0 \ge 20\rho_Q$, otherwise $x \in Q+(t_0, t_0^2) \subseteq T(Q, 25\rho_Q)$. This implies $R_Q(x) \le 25\rho_Q$, which contradicts to our assumption. Therefore, 
$$
x \in Q+(t_0, t_0^2) \subseteq T(Q, t_0),
$$
which gives $R_Q(x)\leq t_0$.  On the other hand, if
$x\in T(Q,R)$, then
$$
        x
        =
        z+u+(t_0,t_0^2)
        =
        z+v+(s,s^2)
$$
for some $u\in Q-z$, $v\in Q^*-z$, and $|s|\leq R$. Comparing the first
coordinates gives
$$
        |t_0-s|
        \le
        5\rho_Q,
$$
and hence
$$
        R
        \geq
        t_0-5\rho_Q. 
$$
Taking the infimum over all such $R$ such that $x \in T(Q, R)$ gives
$$
        R_Q(x)
        \geq
        t_0-5\rho_Q. 
$$
Therefore,
$$
r \simeq t_0 \simeq R_Q(x).
$$
For every $t\in I_{x,r}$, both $x-(t,t^2)$ and
$x-(t_0,t_0^2)$ belong to $Q$. Comparing the second coordinates gives
$$
        |t^2-t_0^2|
        \lesssim
        \rho_Q^2.
$$
Since $t+t_0\simeq r \simeq  R_Q(x)$, we obtain
$$
        |t-t_0|
        \lesssim
        \frac{\rho_Q^2}{R_Q(x)},
        \qquad \textrm{and hence} \qquad 
        |I_{x,r}|
        \lesssim
        \frac{\rho_Q^2}{R_Q(x)}.
$$
Substituting these estimates into \eqref{20260713eq10} and using
$r\simeq R_Q(x)$, we obtain
$$
        |A_ra_Q(x)|
        \lesssim
        \rho_Q^{2-\frac3p}R_Q(x)^{-2},
$$
where the implict constant in the above estimate is indepednent of $r$.
Taking the supremum over $r>0$ proves the lemma.
\end{proof}

\begin{prop}\label{20260714prop01}
Let $0<p \le 1$, and let $a_Q$ be a parabolic $(p,\infty)$-atom supported
in $Q$. Then
\begin{equation}\label{v27:eq:singleatom}
        \|M_{\textnormal{par}}a_Q\|_{\calT^{p,\infty}(\R^2)}^p
        \lesssim_p
        w_p(Q).
\end{equation}
\end{prop}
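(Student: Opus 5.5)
The plan is to turn the pointwise decay of Lemma~\ref{20260713lem01} into a covering of each level set by a \emph{single} tendril. Then $\alpha^p C_p(\{M_{\textnormal{par}}a_Q>\alpha\})$ can be computed directly, and the power of $\alpha$ should cancel exactly. Write $\rho=\rho_Q$. By Lemma~\ref{20260713lem01} there is a constant $C_0$, depending only on $p$ and $\varphi$, such that
$$
M_{\textnormal{par}}a_Q(x)\le C_0\,\rho^{2-\frac3p}R_Q(x)^{-2},\qquad x\in\R^2 .
$$
Fix $\alpha>0$ and set
$$
R_\alpha:=\bigl(C_0\rho^{2-\frac3p}/\alpha\bigr)^{1/2}.
$$
Then $\{M_{\textnormal{par}}a_Q>\alpha\}\subseteq\{x: R_Q(x)<R_\alpha\}$.

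First I split according to the size of $\alpha$. If $R_\alpha\le\rho$, the level set is empty, because $R_Q(x)\ge\rho$ always; hence $C_p(\varnothing)=0$ and there is nothing to prove. If $R_\alpha>\rho$, I use the monotonicity $T(Q,R)\subseteq T(Q,R')$ for $R\le R'$, which is immediate from the definition. If $R_Q(x)<R_\alpha$, then by the definition of the infimum there is some $R<R_\alpha$ with $x\in T(Q,R)$, and so $x\in T(Q,R_\alpha)$. Thus the single pair $(\theta,T)=(1,T(Q,R_\alpha))$ is an admissible tendril cover, since $R_\alpha\ge\rho$, and Definition~\ref{20260714defn01} gives
$$
\alpha^pC_p(\{M_{\textnormal{par}}a_Q>\alpha\})\le \alpha^p\rho R_\alpha^{2p}
=C_0^p\,\alpha^p\rho\,\rho^{2p-3}\alpha^{-p}=C_0^p\,\rho^{2p-2}.
$$
This bound is uniform in $\alpha$, so $\|M_{\textnormal{par}}a_Q\|^p_{\calT^{p,\infty}}\lesssim\rho^{2p-2}$.

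Finally I compare $\rho^{2p-2}$ with $w_p(Q)$. If $\rho\le1$, then $\rho^{2p-2}=w_p(Q)$ exactly. If $\rho>1$, then $2p-2\le0$ gives $\rho^{2p-2}\le1=w_p(Q)$. Together these prove \eqref{v27:eq:singleatom}.

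I do not expect a real obstacle here, since all the analytic content sits in Lemma~\ref{20260713lem01}. The only points needing care are the following.
\begin{enumerate}
\item The nesting of tendrils in $R$, which justifies passing from $R_Q(x)<R_\alpha$ to $x\in T(Q,R_\alpha)$.
\item The admissibility requirement $R_\alpha\ge\rho$ in a tendril cover, which is exactly handled by the empty-set case.
\item The exact cancellation of the powers of $\alpha$. This cancellation is why the exponent $2p$ on $R_i$ in \eqref{20260710eq01} matches the decay $R_Q(x)^{-2}$.
\end{enumerate}
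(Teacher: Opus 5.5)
Your proposal is correct and follows essentially the same route as the paper. Lemma~\ref{20260713lem01} places each level set $\{M_{\textnormal{par}}a_Q>\alpha\}$ inside the single tendril $T(Q,R_\alpha)$ with $R_\alpha=(C_0\rho_Q^{2-3/p}/\alpha)^{1/2}>\rho_Q$, and this gives $\alpha^pC_p(\{M_{\textnormal{par}}a_Q>\alpha\})\le C_0^p\rho_Q^{2p-2}\le C_0^p\, w_p(Q)$. Your explicit use of the nesting of $T(Q,R)$ in $R$ and of the empty case $R_\alpha\le\rho_Q$ spells out what the paper compresses into ``otherwise $R_Q(x_0)>R_\alpha$''.
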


\begin{proof}
For $\alpha>0$, set
$$
E_\alpha:=
\{M_{\textnormal{par}}a_Q>\alpha\}.
$$
Without loss of generality, we may assume $E_\alpha$ is not empty. Then for any $x_0 \in E_\alpha$, by Lemma~\ref{20260713lem01},  
\begin{equation} \label{20260714eq10}
\alpha<C\rho_Q^{2-\frac{3}{p}}R_Q(x_0)^{-2} \le C\rho_Q^{2-\frac{3}{p}} \cdot \rho_Q^{-2} 
\end{equation} 
for some absolute constant $C>0$, where we have used the fact that $R_Q(x_0) \ge \rho_Q$. This means $x_0 \in T(Q, R_\alpha)$, where 
$$
R_\alpha:=\left(\frac{C\rho_Q^{2-\frac3p}}{\alpha} \right)^{1/2}
>\rho_Q.
$$
Otherwise $R_Q(x_0)>R_\alpha$, which contradicts with \eqref{20260714eq10}.
Therefore, we derive that 
$$
        E_\alpha
        \subset
        T(Q,R_\alpha).
$$
Hence
$$
        C_p(E_\alpha) \le C_p \left(T(Q, R_{\alpha}) \right)
        \leq
        \rho_Q R_\alpha^{2p}
        \lesssim
        \alpha^{-p}\rho_Q^{2p-2}.
$$
If $0<\rho<1$, then
$$
\rho_Q^{2p-2}=w_p(Q),
$$
while if $\rho_Q \geq1$, then
$$
\rho_Q^{2p-2} \leq 1
=w_p(Q).
$$
Therefore,
$$
        \alpha^pC_p(E_\alpha)
        \lesssim_p
        w_p(Q).
$$
Taking the supremum over $\alpha>0$ proves
\eqref{v27:eq:singleatom}.
\end{proof}

The following crucial lemma shows that, for $0<p<1$, the
$\calT^{p,\infty}$ quasi-norm has a subadditive structure. We emphasize
that the assumption $p<1$ is essential for this result.

\begin{lem}\label{20260714prop22}
Let $0<p<1$, and let $\{F_j\}_{j \in J}$ be a finite or countable family of nonnegative measurable
functions. Suppose that there exist numbers $A_j\geq0$ such that $\sum_{j \in J} A_j<\infty$ and 
\begin{equation}\label{20260714eq30}
C_p(\{F_j>t\}) \leq A_jt^{-p}, \qquad t>0.
\end{equation}
Then
\begin{equation}\label{20260714eq31}
        \left\|
        \sum_{j \in J} F_j
        \right\|_{\calT^{p,\infty}(\R^2)}^p
        \lesssim_p
        \sum_{j \in J} A_j.
\end{equation}
\end{lem}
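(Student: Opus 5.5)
We plan to follow the classical argument showing that weak $L^p$ is $p$-convex for $p<1$ (Stein--Taibleson--Weiss), with Lebesgue measure replaced by the $p$-tendril content $C_p$. Two structural facts about $C_p$ are needed first, and both follow directly from Definition~\ref{20260714defn01}. The first is monotonicity: $E\subset E'$ implies $C_p(E)\le C_p(E')$. The second is countable subadditivity: $C_p(\bigcup_j E_j)\le \sum_j C_p(E_j)$. For the second, take near-optimal tendril covers of each $E_j$ with error $\varepsilon 2^{-j}$ and concatenate them. The union of the covers is again a tendril cover, since $\one_{\bigcup E_j}\le \sum_j \one_{E_j}$ almost everywhere. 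Beyond these, the key tool is a weighted version of content. Because covers carry weights $\theta_i$, $C_p$ is really an outer content for functions, and we expect it to obey a Chebyshev-type inequality. Concretely, if $0\le G\le \sum_j \theta_j\one_{E_j}$, then $C_p(\{G>\lambda\})\le \lambda^{-1}\sum_j \theta_j C_p(E_j)$. This follows by rescaling the covers of the $E_j$ by $\theta_j/\lambda$, since $\one_{\{G>\lambda\}}\le G/\lambda$.

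Now fix $\alpha>0$ and set $A:=\sum_j A_j$. We may normalize $A=1$ by homogeneity: replace $F_j$ by $F_j/A^{1/p}$ and note that $C_p(\{F_j/A^{1/p}>t\})\le (A_j/A)t^{-p}$. We then split each $F_j = F_j\one_{\{F_j>\alpha\}} + F_j\one_{\{F_j\le\alpha\}} =: F_j' + F_j''$. By subadditivity and \eqref{20260714eq30},
\[
C_p\Bigl(\Bigl\{\sum_j F_j'>0\Bigr\}\Bigr)\le \sum_j C_p(\{F_j>\alpha\})\le \alpha^{-p}\sum_j A_j .
\]
For the small part, we use the layer-cake decomposition $F_j''\le \int_0^{\alpha}\one_{\{F_j>t\}}\,dt$. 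Discretizing dyadically gives $F_j''\le \sum_{k\ge 0} 2^{-k}\alpha\,\one_{\{F_j>2^{-k-1}\alpha\}}$. Applying the weighted Chebyshev inequality at level $\alpha/2$ then yields
\[
C_p\Bigl(\Bigl\{\sum_j F_j''>\alpha/2\Bigr\}\Bigr)\le \frac{2}{\alpha}\sum_j\sum_{k\ge0}2^{-k}\alpha\, A_j\,(2^{-k-1}\alpha)^{-p}\lesssim \alpha^{-p}\sum_j A_j\sum_{k}2^{-k(1-p)} .
\]
The inner geometric series converges precisely because $p<1$; this is exactly where the hypothesis $p<1$ enters, and the constant blows up like $(1-p)^{-1}$. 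Finally, $\{\sum F_j>\alpha\}\subset\{\sum F_j'>0\}\cup\{\sum F_j''>\alpha/2\}$ holds up to adjusting constants; more precisely, $\{\sum F_j>\alpha\}\subset\{\sum F_j'>\alpha/2\}\cup\{\sum F_j''>\alpha/2\}$. Combining this with subadditivity, multiplying by $\alpha^p$ and taking the supremum gives \eqref{20260714eq31}.

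The main obstacle is the weighted Chebyshev step. We must verify that the almost-everywhere pointwise domination is legitimate when infinitely many covers are combined with weights, and that the infimum in $C_p$ commutes properly with countable sums. We expect this to go through, because a tendril cover is defined as a pointwise almost-everywhere inequality of nonnegative series. Summing countably many such inequalities, multiplied by nonnegative weights, remains a valid cover, and its cost is the corresponding weighted sum. The $\varepsilon 2^{-j-k}$ approximations are then absorbed at the end. No use of the geometry of tendrils is needed here. The lemma is purely a statement about the outer-content structure of $C_p$, which is why it holds for general nonnegative $F_j$.
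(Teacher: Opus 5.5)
Your proposal is correct and is essentially the paper's proof. Your weighted Chebyshev inequality for tendril covers is exactly what the paper packages as the functional $\Lambda_p$ (homogeneous, monotone, countably subadditive, with $\Lambda_p(\one_E)=C_p(E)$), and your split into large parts (handled by countable subadditivity of $C_p$) and small parts (handled by a dyadic layer decomposition and the series $\sum_k 2^{-k(1-p)}$, which needs $p<1$) is the same; you truncate at $\alpha$ and threshold at $\alpha/2$, while the paper truncates at $\alpha/2$ and thresholds at $\alpha$, and your normalization $\sum_j A_j=1$ is never used and can be dropped.
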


\begin{proof}
For a nonnegative measurable function $G$, define its \emph{$p$-tendril covering cost} by
$$
\Lambda_p(G):=\inf \left\{
\sum_{i \ge 1} \theta_i\rho_{Q_i}R_i^{2p}:
G(x) \le \sum_{i \ge 1} \theta_i\one_{T(Q_i,R_i)}(x), \; a.e. \; x \in \R^2, \quad  \theta_i \ge 0, R_i \ge \rho_{Q_i}
\right\}.
$$
We record several basic properties of $\Lambda_p$.

\begin{enumerate}
    \item [$\bullet$] For every measurable set $E\subseteq\R^2$,
    \begin{equation} \label{20260714eq49}
            \Lambda_p(\one_E)
            =
            C_p(E).
    \end{equation} 
    This follows immediately from the definition of $C_p(E)$; see
    Definition~\ref{20260714defn01}.

    \item [$\bullet$]  $\Lambda_p$ is positively homogeneous. More
    precisely, $\Lambda_p(0)=0$, and for every $c>0$ and every nonnegative
    measurable function $G$,
    \begin{equation} \label{20260714eq50}
\Lambda_p(cG)=c\Lambda_p(G).
    \end{equation}

    \item [$\bullet$] $\Lambda_p$ is monotone, in the sense that if $0 \le G \le H$, then $\Lambda_p(G) \le \Lambda_p(H)$.  

    \item [$\bullet$] $\Lambda_p$ is countably subadditive in the
    following sense. Let $G$ and $\{G_m\}_{m\geq1}$ be nonnegative
    measurable functions such that
   $$
            G(x)
            =
            \sum_{m=1}^{\infty}G_m(x)
 $$
    for almost every $x\in\R^2$. Then
     \begin{equation} \label{20260714eq52}
            \Lambda_p(G)
            \leq
            \sum_{m=1}^{\infty}\Lambda_p(G_m).
     \end{equation}
This follows clearly from the definition. 
\end{enumerate}

\vspace{0.1cm}

Fix now $\alpha>0$ and set
$$
        G_j
        :=
        F_j\one_{\{F_j\leq \frac{\alpha}{2}\}}.
$$
The pointwise dyadic decomposition
$$
G_j=\sum_{k \ge 0} F_j \one_{\left\{\frac{\alpha}{2^{k+2}}<F_j \le  \frac{\alpha}{2^{k+1}} \right\}}
        \lesssim
        \sum_{k\geq0}
        \frac{\alpha}{2^k}\one_{\left\{F_j>\frac{\alpha}{2^{k+2}}\right\}},
$$
together with \eqref{20260714eq49}, \eqref{20260714eq50}, \eqref{20260714eq52}, and \eqref{20260714eq30} gives
\begin{align} \label{20260714eq54}
        \Lambda_p(G_j)
        & \le \sum_{k \ge 0} \Lambda_p \left( \frac{\alpha}{2^k}\one_{\left\{F_j>\frac{\alpha}{2^{k+2}}\right\}} \right)  \nonumber \\ 
        &\lesssim
        \sum_{k\geq0}
       \frac{\alpha}{2^k} \cdot 
        C_p \left(\left\{F_j>\frac{\alpha}{2^{k+2}}\right\} \right) \nonumber   \\
        &\lesssim_p
        A_j\alpha^{1-p}
        \sum_{k\geq0}2^{-k(1-p)}  \nonumber \\
        &\lesssim_p
        A_j\alpha^{1-p},
\end{align}
where we have used the assumption $p<1$ in the last estimate above. 

Now put
$$
F:=\sum_{j \in J} F_j, \qquad \textrm{and} \qquad G:= \sum_{j \in J} G_j.
$$
We claim that
\begin{equation}\label{20260714eq40}
 \{F>\alpha\}
 \subseteq
 \left(\bigcup_{j\in J}\{F_j>\alpha/2\}\right)
 \cup\{G>\alpha\}.
\end{equation}
Indeed, if $x \in \{F>\alpha\} \backslash \left(\bigcup_{j\in J}\{F_j>\alpha/2\}\right)$, then $F_j(x) \le \alpha/2$, and hence $F_j(x)=G_j(x)$, which further yields $F(x)=G(x)$. Since $F(x)>\alpha$, this forces $G(x)>\alpha$. The claim \eqref{20260714eq40} is proved.

By the countably subadditivity of $\Lambda_p$, one has
\begin{equation} \label{20260714eq25}
C_p \left( \left\{F>\alpha\right\} \right) \le C_p \left(\bigcup_{j\in J}\{F_j>\alpha/2\}\right) +C_p \left( \{G>\alpha\} \right)
\end{equation} 
To estimate the right hand side of \eqref{20260714eq25}, first using the countably subadditivity of $\Lambda_p$ again with the assumption \eqref{20260714eq30}, we see that
\begin{equation} \label{20260714eq26}
        C_p\left(
        \bigcup_{j \in J} \{F_j>\alpha/2\}
        \right) \le \sum_{j \in J} C_p \left(\{F_j>\alpha/2\}
        \right) 
        \lesssim
        \alpha^{-p}
        \sum_{j \in J} A_j.
\end{equation} 
Next, to estimate the second $p$-tendril content in \eqref{20260714eq26}, using the fact that $\one_{G>\alpha} \le G/\alpha$ and \eqref{20260714eq54}, we have 
\begin{align} \label{20260714eq26a}
        C_p(\{G>\alpha\})
        &=\Lambda_p(\one_{G>\alpha}) \le \Lambda_p \left( \frac{G}{\alpha} \right) \nonumber \\
        &=\alpha^{-1}\Lambda_p(G) \le 
        \alpha^{-1}
        \sum_{j \in J} \Lambda_p(G_j) \nonumber  \\
        &\lesssim_p
        \alpha^{-p}
        \sum_{j \in J} A_j.
\end{align} 
Combining \eqref{20260714eq25}, \eqref{20260714eq26}, and \eqref{20260714eq26a} yields 
$$
        C_p(\{F>\alpha\})
        \lesssim_p
        \alpha^{-p}
        \sum_{j \in J} A_j.
$$
Multiplying by $\alpha^p$ and taking the supremum over $\alpha>0$
proves \eqref{20260714eq31}.
\end{proof}

We are ready to state our main result in this section. 

\begin{thm} \label{mainwhole}
Let $0<p<1$. Then $M_{\textnormal{par}}$ extends to a bounded operator
$$
        M_{\textnormal{par}}:
        H_{\textnormal{par}}^{p,*}(\R^2)
        \longrightarrow
        \calT^{p,\infty}(\R^2).
$$
More precisely, let
$$
f=\sum_{j\in J}\lambda_ja_{Q_j}
$$
be a finite or countable atomic representation, and assume
that
$$
        \sum_{j\in J}|\lambda_j|^pw_p(Q_j)<\infty.
$$
Then
\begin{equation}\label{20260714main01}
        \|M_{\textnormal{par}}f\|_{\calT^{p,\infty}(\R^2)}^p
        \lesssim_p
        \sum_{j\in J}|\lambda_j|^pw_p(Q_j).
\end{equation}
\end{thm}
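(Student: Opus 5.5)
The plan is to reduce \eqref{20260714main01} to the single-atom estimate of Proposition~\ref{20260714prop01} and then sum these estimates with the subadditivity result Lemma~\ref{20260714prop22}.

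\textbf{Step 1: pointwise reduction.} Given the representation $f=\sum_{j\in J}\lambda_j a_{Q_j}$, which converges in $L^1_{\textnormal{loc}}(\R^2)$, I first show that for each $r>0$ and almost every $x$,
$$
|A_rf(x)|\le \sum_{j\in J}|\lambda_j|\,|A_ra_{Q_j}(x)| .
$$
For fixed $r$, the operator $A_r$ is an average against a compactly supported measure on a piece of the parabola. So $A_r$ maps $L^1(K')$ boundedly into $L^1(K)$ for compact sets $K\subset K'$ with $K'\supset K-\{(t,t^2):t\in\supp\varphi(\cdot/r)\}$. Hence the partial sums $A_r(\sum_{j\le n}\lambda_ja_{Q_j})$ converge to $A_rf$ in $L^1_{\textnormal{loc}}$, and a subsequence converges almost everywhere, which gives the inequality above.

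To take the supremum over all $r>0$ without an uncountable union of null sets, I restrict to rational $r$. Since $\varphi$ is smooth, $r\mapsto A_rg(x)$ is continuous for locally integrable $g$ (for a.e. $x$, or after taking the $\limsup$). This yields
$$
M_{\textnormal{par}}f(x)\le \sum_{j\in J}F_j(x), \qquad F_j:=|\lambda_j|\,M_{\textnormal{par}}a_{Q_j},
$$
almost everywhere. Because $C_p$ is defined through a.e. covers, it is insensitive to null sets. Together with monotonicity of the level sets, the quasi-norm of $M_{\textnormal{par}}f$ is then controlled by that of $\sum_jF_j$.

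\textbf{Step 2: single-atom bounds.} Proposition~\ref{20260714prop01} gives, for every $t>0$,
$$
t^pC_p(\{M_{\textnormal{par}}a_{Q_j}>t\})\lesssim_p w_p(Q_j).
$$
If $\lambda_j\neq0$, I substitute $t\mapsto t/|\lambda_j|$ and obtain
$$
C_p(\{F_j>t\})\le A_jt^{-p}, \qquad A_j:=C|\lambda_j|^pw_p(Q_j).
$$
If $\lambda_j=0$, then $F_j=0$ and I set $A_j=0$. By hypothesis $\sum_jA_j<\infty$.

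\textbf{Step 3: summation.} Lemma~\ref{20260714prop22} now applies directly, since the $F_j$ are nonnegative and measurable. It gives $\|\sum_jF_j\|^p_{\calT^{p,\infty}}\lesssim_p\sum_j|\lambda_j|^pw_p(Q_j)$, which is \eqref{20260714main01} after Step~1.

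Taking the infimum over all representations of $f$ then shows that $M_{\textnormal{par}}$ is bounded from $H^{p,*}_{\textnormal{par}}(\R^2)$ to $\calT^{p,\infty}(\R^2)$. In particular the bound does not depend on the chosen representation, so the operator is well defined on the space.

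\textbf{Main obstacle.} All the real difficulty sits in Step~1: justifying that the maximal function of an $L^1_{\textnormal{loc}}$-convergent series is dominated almost everywhere by the sum of the maximal functions. The key inputs are the local $L^1$ continuity of each $A_r$ and the reduction to countably many $r$. Measurability of $M_{\textnormal{par}}f$ needs a brief remark, which I would handle by defining the supremum over rational $r$. The remaining steps are direct substitutions into results already proved.
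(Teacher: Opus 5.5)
Your proof is correct. Its skeleton is the paper's: Proposition~\ref{20260714prop01} rescaled to $A_j=C|\lambda_j|^pw_p(Q_j)$, summation by Lemma~\ref{20260714prop22}, then monotonicity. The genuine difference is how you justify $M_{\textnormal{par}}f\le G:=\sum_jF_j$ a.e.

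The paper first bounds $G$ in $\calT^{p,\infty}$. It then uses the non-degeneracy Proposition~\ref{20260714prop10} to get $G<\infty$ a.e., so the tails $\sum_{j>N}F_j(x)\to0$ make $A_rf_N(x)$ uniformly Cauchy in $r$, giving $M_{\textnormal{par}}f_N\to M_{\textnormal{par}}f$ a.e.

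You instead fix a rational $r$ and pass to an a.e.-convergent subsequence of $A_rf_N$. The bound $|A_rf(x)|\le G(x)$ is trivial where $G(x)=\infty$, so you need neither finiteness of $G$ nor Proposition~\ref{20260714prop10}. You get only the inequality, not the a.e. convergence of $M_{\textnormal{par}}f_N$ or the $r$-uniform absolute convergence of the atomic series that the paper's route yields. The inequality is all \eqref{20260714main01} requires. Moreover, the paper's ``standard limiting argument'' tacitly needs the same countable-$r$ reduction you make explicit, in order to identify the uniform limit with $A_rf(x)$ for all $r$ at once.

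To make that reduction rigorous, replace ``or after taking the $\limsup$'' with the following argument. By Tonelli, $\int_K\int_0^b|f(x-(t,t^2))|\,dt\,dx<\infty$ for every compact $K$ and every $b>0$. Hence for a.e.\ $x$ the map $t\mapsto f(x-(t,t^2))$ is locally integrable on $[0,\infty)$. For such $x$, $A_rf(x)$ is defined for all $r>0$ simultaneously and is continuous in $r$ by dominated convergence; continuity of $\varphi$ suffices, smoothness is not needed.

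Your closing remark about well-definedness is superfluous, since $M_{\textnormal{par}}$ acts directly on $L^1_{\textnormal{loc}}$ functions.
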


\begin{proof}
We divide the proof into several steps. 

\vspace{0.1cm}

\noindent{\bf Step 1.}
For each $j\in J$, set
$$
        F_j
        :=
        |\lambda_j|M_{\textnormal{par}}a_{Q_j},
$$
and define
$$
        G
        :=
        \sum_{j\in J}F_j.
$$
By Proposition~\ref{20260714prop01}, there exists a constant
$c_p>0$ such that, for every $t>0$,
$$
        C_p(\{F_j>t\})
        \leq
        c_p|\lambda_j|^pw_p(Q_j)t^{-p}.
$$
Applying Lemma~\ref{20260714prop22} with
$$
        A_j
        :=
        c_p|\lambda_j|^pw_p(Q_j),
$$
we obtain
\begin{equation}\label{20260714main02}
        \|G\|_{\calT^{p,\infty}(\R^2)}^p
        \lesssim_p
        \sum_{j\in J}|\lambda_j|^pw_p(Q_j).
\end{equation}

\medskip

\noindent{\bf Step 2.}
We claim that
\begin{equation}\label{20260714claim01}
        G(x)<\infty
        \qquad
        \textrm{for almost every }x\in\R^2.
\end{equation}
Indeed, suppose that
$$
        E_\infty
        :=
        \{x\in\R^2:G(x)=\infty\}
$$
has positive Lebesgue measure. By Proposition~\ref{20260714prop10},
$$
        C_p(E_\infty)>0.
$$
Since
$$
        E_\infty
        \subseteq
        \{G>\alpha\}
$$
for every $\alpha>0$, we have
$$
\begin{aligned}
        \|G\|_{\calT^{p,\infty}(\R^2)}^p
        &\geq
        \alpha^pC_p(\{G>\alpha\})\\
        &\geq
        \alpha^pC_p(E_\infty).
\end{aligned}
$$
Letting $\alpha\to\infty$ contradicts \eqref{20260714main02}.
Therefore, \eqref{20260714claim01} holds.

\medskip

\noindent{\bf Step 3.}
We prove that
\begin{equation}\label{20260714pointwise}
        M_{\textnormal{par}}f(x)
        \leq
        G(x)
\end{equation}
for almost every $x\in\R^2$.

If $J$ is finite, \eqref{20260714pointwise} follows immediately from
the sublinearity of $M_{\textnormal{par}}$.

Suppose that $J$ is countable. After relabeling, we may assume that
$J=\mathbb N$. For $N\geq1$, set the partial sum $
f_N:=\sum_{j=1}^N\lambda_ja_{Q_j}$. 
By \eqref{20260714claim01}, 
$$
        \sum_{j=N+1}^{\infty}F_j(x)
        \longrightarrow0
        \qquad\textrm{as }N\to\infty
$$
for almost every $x\in\R^2$. Moreover, for every $M>N$,
\begin{equation} \label{20260715eq01}
        \sup_{r>0}
        |A_rf_M(x)-A_rf_N(x)|
        \leq
        \sum_{j=N+1}^MF_j(x).
\end{equation} 
Thus, for almost every $x\in\R^2$, the sequence
$\{A_rf_N(x)\}_{N\geq1}$ is uniformly Cauchy with respect to $r>0$.

Since $f_N \to f \; \textrm{in }L_{\textrm{loc}}^1(\R^2)$, 
by Fubini, we have 
$$
        A_rf_N
        \longrightarrow
        A_rf
        \qquad
        \textrm{in }L_{\textrm{loc}}^1(\R^2)
$$
for every fixed $r>0$. Letting now $M \to \infty$ in \eqref{20260715eq01} and arguing via a standard limiting argument, we derive that 
$$
        M_{\textnormal{par}}f_N(x)
        \longrightarrow
        M_{\textnormal{par}}f(x)
$$
for almost every $x\in\R^2$.

For every $N\geq1$, it is clear that $M_{\textnormal{par}}f_N \leq \sum_{j=1}^NF_j$. 
Letting $N\to\infty$, we obtain
$M_{\textnormal{par}}f \leq \sum_{j=1}^{\infty}F_j=G$
almost everywhere. This proves \eqref{20260714pointwise}.

\medskip

\noindent{\bf Step 4.}
By \eqref{20260714pointwise}, the monotonicity of the
$\calT^{p,\infty}(\R^2)$ quasi-norm, and \eqref{20260714main02},
$$
\begin{aligned}
        \|M_{\textnormal{par}}f\|_{\calT^{p,\infty}(\R^2)}^p
        &\leq
        \|G\|_{\calT^{p,\infty}(\R^2)}^p\\
        &\lesssim_p
        \sum_{j\in J}|\lambda_j|^pw_p(Q_j).
\end{aligned}
$$
This proves \eqref{20260714main01}. Taking the infimum over all atomic
representations of $f$ gives
$$
        \|M_{\textnormal{par}}f\|_{\calT^{p,\infty}(\R^2)}
        \lesssim_p
        \|f\|_{H_{\textnormal{par}}^{p,*}(\R^2)}.
$$
The proof is complete.
\end{proof}

\end{document}